\documentclass[11pt]{article}
\usepackage{fancyhdr,graphicx}
\usepackage{amsfonts}
\usepackage{mathtools}
\usepackage{amssymb}
\usepackage{amsthm}
\usepackage{newlfont}
\usepackage{amsmath, bm}
\usepackage{multicol}
\usepackage{subfigure}

\textwidth 15 cm \textheight 21 cm

\hoffset -1.3cm \voffset-1.5cm

\newtheorem{theorem}{Theorem}[section]
\newtheorem{lemma}[theorem]{Lemma}
\newtheorem{remark}[theorem]{Remark}

\newtheorem{proposition}[theorem]{Proposition}
\newtheorem{definition}[theorem]{Definition}

\newtheorem{problem}[theorem]{Problem}
\graphicspath{{figures/}}

\setlength{\baselineskip}{17pt}
\usepackage{mathrsfs}
\usepackage{subfig}
\usepackage{picinpar}
\begin{document}
	
	\title
	{Introducing a vertex polynomial invariant for embedded graphs}
	
	\author{Qi Yan\\
		{\small School of Mathematics and Statistics, Lanzhou University, P. R. China}\\
        Qingying Deng\footnote{Corresponding author.}\\
		{\small School of Mathematics and Computational Science, Xiangtan University, P. R. China}\\
		Metrose Metsidik\\
		{\small College of Mathematical Sciences, Xinjiang Normal University, P. R. China}\\
		{\small{Email: yanq@lzu.edu.cn; qingying@xtu.edu.cn; metrose@xjnu.edu.cn}}
	}
	\date{}
	
	\maketitle
	
\begin{abstract}
The ribbon group action extends geometric duality and Petrie duality by defining two embedded graphs as twisted duals precisely when they lie within the same orbit under this group action. Twisted duality yields numerous novel properties of fundamental graph polynomials. In this paper, we resolve a problem raised by Ellis-Monaghan and
Moffatt [Trans.
Amer. Math. Soc. 364 (2012), 1529--1569] for vertex counts by introducing the vertex polynomial: a generating function quantifying vertex distribution across orbits under the ribbon group action. We establish its equivalence via transformations of boundary component enumeration and derive recursive relations through edge deletion, contraction, and twisted contraction. For bouquets, we prove the polynomial depends only on signed intersection graphs. Finally, we provide topological interpretations for the vertex polynomial by connecting this polynomial to the interlace polynomial and the topological transition polynomial.
\end{abstract}
	
$\mathbf{keywords:} $ Ribbon graph, ribbon group action, vertex polynomial, transition polynomial

\section{Introduction}

 Chmutov \cite{CG} generalized geometric duality for embedded graphs by constructing duals relative to individual edges. This duality, termed \emph{partial duality}, has found significant applications in mathematics and physics (see, e.g., \cite{CG, Gross2020, KRT} and references therein). Ellis-Monaghan and Moffatt \cite{EM1} advanced this concept by defining two operations applicable to edges of an embedded graph $G$: taking the dual with respect to an edge and applying a half-twist to an edge.
These operations generate a group action of ${S_3}^{|E(G)|}$ (the \emph{ribbon group}) on $G$. This action constitutes a significant extension of geometric duality and Petrie duality. Moreover, it enables deep insights into structural characteristics and interrelations of fundamental graph polynomials through the generalized transition polynomial, exhibiting intrinsic compatibility.

Specifically, the ribbon group action defines two embedded graphs as \emph{twisted duals} if and only if they lie within the same orbit under this action. Twisted duality provides a powerful framework for unifying and analyzing topological graph polynomials, notably by deciphering structural properties of the topological
transition polynomial, which generalizes multiple graph polynomials, and enabling key
advances in understanding the Penrose polynomial through its recovery from the transition polynomial \cite{EM1}, as well as revealing properties of the topological Tutte polynomial
via its equivalence with the transition polynomial under specific constraints \cite{ESA}. These
applications provide further evidence that twisted duality has the scope to develop the
understanding of a wide variety of graph theoretical problems.

Ellis-Monaghan and Moffatt \cite{EM1} posed the following fundamental question:

\begin{problem}[\cite{EM1}]\label{prb01}
How do invariants of an embedded graph $G$, such as orientability, genus, degree sequence, or number of vertices, vary over the elements in an orbit of the ribbon group action. Is it possible to determine ranges for any of these invariants?
\end{problem}

Regarding genus in Problem \ref{prb01},
Gross, Mansour and Tucker \cite{Gross2021} introduced the \emph{partial-dual polynomial} of a ribbon graph as a generating
function that enumerates all partial duals of the ribbon graph by Euler genus. This polynomial has been extensively studied \cite{CFV, Gross2020, Gross2021, QYXJ1, QYXJ2}. Particularly, \cite{QYXJ3,QYXJ4,DYU} generalized it to delta-matroids. To solve the vertex count case of Problem \ref{prb01}, we introduce a \emph{vertex polynomial}, defined as a generating function that systematically quantifies vertex distribution across orbits of the ribbon group action.

This paper is organized as follows. Section 2 establishes terminology and reviews ribbon graphs, partial duals, partial Petrials, and twisted duals. Section 3 introduces a vertex polynomial defined via group orbits to systematically analyze ribbon subgroup actions on vertex counts, and proves the equivalence of this polynomial through transformations of the boundary component enumeration, and establishes fundamental connections between vertex polynomials and ribbon subgroup actions. Recursive relations via edge deletion, contraction, and twisted contraction are derived. Section 4 investigates properties of the vertex polynomial. Section 5 demonstrates that the vertex polynomial of a bouquet depends only on its signed intersection graph. Section 6 establishes topological interpretations through interlace polynomials and the topological transition polynomial.

\section{Preliminaries}

In this paper, embedded graphs are represented as ribbon graphs through the following definition:

\begin{definition}[\cite{BR}]
A {\it ribbon graph} $G=(V(G), E(G))$ is a $($orientable or non-orientable$)$ surface with boundary,
represented as the union of two sets of topological discs: a set $V(G)$ of vertices and a set $E(G)$ of edges with the following properties.
\begin{description}
\item[\rm (1)] The vertices and edges intersect in disjoint line segments.
\item[\rm (2)] Each such line segment lies on the boundary of precisely one vertex and precisely one edge.
\item[\rm (3)] Every edge contains exactly two such line segments.
\end{description}
\end{definition}

Ribbon graphs provide an equivalent characterization of graphs cellularly embedded in surfaces (see  \cite{EM}).
A ribbon graph is \emph{orientable} if its underlying surface is orientable; otherwise, it is \emph{non-orientable}.  For a ribbon graph $G$, let $f(G)$ denote the number of boundary components, and let $v(G)$ and $e(G)$ denote the number of vertices and edges of $G$, respectively. Given an edge subset $A \subseteq E(G)$, the \emph{ribbon subgraph} $G \setminus A$ is obtained by deleting all edges in $A$ while retaining all vertices. The \emph{spanning ribbon subgraph} induced by $A$ is defined as $(V(G), A)=G\setminus A^c$, where $A^c \coloneqq E(G) \setminus A$. As is common practice in the area, we often omit the braces of one element sets, for example writing $A\cup e$ for $A\cup \{e\}$.

A \emph{bouquet} is a ribbon graph with exactly one vertex. An edge in a ribbon graph is called a \emph{loop} if both its endpoints are incident to the same vertex. A loop is \emph{non-orientable} if the corresponding ribbon (considered together with its associated vertex) forms a M\"{o}bius band; otherwise, it is an \emph{orientable loop}. Two loops in a bouquet $B$ are \emph{interlaced} if their half-edges alternate in the cyclic ordering around the vertex boundary. A loop is called \emph{non-trivial} if it interlaces with at least one other loop in $B$; otherwise, it is \emph{trivial}.

Our focus here is on various notions of duality in topological graph theory. We begin by explicating Chmutov's partial duals, as introduced in \cite{CG}.
\begin{definition}[\cite{CG}]\label{def01}
For a ribbon graph $G$ and $A\subseteq E(G)$,  the partial dual $G^{\delta(A)}$ of $G$ with respect to $A$ is a ribbon graph obtained from $G$ by gluing a disc to $G$ along each boundary component of the spanning ribbon subgraph $(V (G), A)$ $($such discs will be the vertex-discs of $G^{\delta(A)})$, removing the interiors of all the vertex-discs of $G$ and keeping the edge-ribbons unchanged.
\end{definition}

The \emph{geometric dual} of $G$ can be defined by $G^*\coloneqq G^{\delta{(E(G))}}$. Next, we introduce the Petrie dual of $G$, denoted by $G^{\times}$, which is also known as the Petrial. This concept originates from Wilson’s work \cite{WIL}.

\begin{definition}[\cite{EM1}]
Let $G$ be a ribbon graph and $A\subseteq E(G)$. Then the partial Petrial, $G^{\tau(A)}$, of a ribbon graph $G$ with respect to $A$ is the ribbon graph obtained from $G$ by adding a half-twist to each edge in $A$.
\end{definition}
The \emph{Petrie dual} of $G$ is $G^{\times}\coloneqq G^{\tau(E(G))}.$ We first observe a fundamental commutativity property: given distinct edges $e, f$ in a ribbon graph, the half-twist operation $\tau(e)$ commutes with the duality operation $\delta(f)$. However, this commutativity fails when both operations act on a single edge. To formalize operator compositions on a common edge, we introduce the following notation.

Let $G$ be a ribbon graph and $A \subseteq E(G)$. For any operator word $w = w_1w_2\cdots w_n$ over the alphabet $\{\delta, \tau\}$, we recursively define the modified ribbon graph
\begin{equation*}
G^{w(A)} \coloneqq (\cdots(G^{w_n(A)})^{w_{n-1}(A)} \cdots)^{w_1(A)}.
\end{equation*}

In \cite{EM1}, it was proved that the partial dual $\delta$ and partial Petrial $\tau$ generate a symmetric group action $S_3$ on ribbon graphs, with the group presentation
\[
S_3 \cong \mathcal{B}\coloneqq \langle \delta, \tau \mid \delta^2, \tau^2, (\delta\tau)^3 \rangle.
\]
Suppose $G$ is a ribbon graph, $A, B\subseteq E(G)$, and $\xi, \pi \in \mathcal{B}$. Then we define $$G^{\xi(A)\pi(B)}\coloneqq(G^{\xi(A)})^{\pi(B)}.$$
Similar to how geometric duals and Petrials relate to partial duals and partial Petrials through the application of $\delta$ or $\tau$ to subsets of edges, structurally significant ribbon graph classes correspond to orbits $\mathrm{Orb}_{\langle\xi\rangle}(G)$ under the ribbon group action generated by an operator $\xi \in \mathcal{B}$. This correspondence is summarized in Table~\ref{tab01} \cite{EM1} and formally defined in Definition~\ref{def02}.

\begin{table}[ht]
\centering
\caption{Taxonomy of classes of twisted duals}
\label{tab01}
\begin{tabular}{l|l|l|l}
\hline
Generator(s) & Order & Applied to all edges & Applied to a subset of edges \\
\hline
\(\delta\) & 2 & geometric dual & partial dual \\
\(\tau\) & 2 & Petrie dual or Petrial & partial Petrial or twist \\
\(\tau\delta\tau\) & 2 & Wilson dual or Wilsonial & partial Wilsonial \\
\(\delta\tau\) or \(\tau\delta\) & 3 & triality & partial triality \\
\(\delta\) and \(\tau\) & 6 & a direct derivative & twisted dual \\
\hline
\end{tabular}
\end{table}

The disjoint union convention for a family of edge subsets $\{A_i\}_{i=1}^n$ in a ribbon graph $G$ is formally defined by the expression $\biguplus_{i=1}^n A_i = E(G)$ if and only if the following two conditions hold:
\begin{itemize}
    \item[{\rm (C1)}] $\bigcup_{i=1}^n A_i = E(G)$,\hspace{1.2em} (\textit{complete coverage}).
    \item[{\rm (C2)}] $A_i \cap A_j = \emptyset$ for all $i \neq j$,\hspace{1.2em} (\textit{pairwise disjointness}).
\end{itemize}

\begin{definition}[\cite{EM1}]\label{def02}
Let $G$ be a ribbon graph. We define
\begin{description}
\item[{\rm (1)}] $\mathrm{Orb}_{\langle\xi\rangle}(G) \coloneqq \left\{ G^{\xi(A)} \,\middle|\, A \subseteq E(G) \right\}$ for $\xi \in \{\delta, \tau, \tau\delta\tau\}$.

\item[{\rm (2)}] $\mathrm{Orb}_{\langle\xi\rangle}(G) \coloneqq \left\{ G^{1(A_1)\tau\delta(A_2)\delta\tau(A_3)} \,\middle|\,
\biguplus_{i=1}^{3} A_i = E(G)  \right\}$ for $\xi \in \{\delta\tau, \tau\delta\}$.

\item[{\rm (3)}]
$\mathrm{Orb}_{\langle\delta, \tau\rangle}(G) \coloneqq \left\{
G^{1(A_1)\delta(A_2)\tau(A_3)\tau\delta(A_4)\delta\tau(A_5)\tau\delta\tau(A_6)}
\,\middle|\, \biguplus_{i=1}^{6} A_i = E(G)
\right\}.$

\item[{\rm (4)}] A ribbon graph $H$ is called a twisted dual  of $G$ if $H \in \mathrm{Orb}_{\langle\delta, \tau\rangle}(G)$.
\end{description}
\end{definition}

\begin{proposition}[\cite{EM1}]
Let $G$ be a ribbon graph, $A, B\subseteq E(G)$ and $\xi, \pi\in \mathcal{B}$. Then the following hold:
\begin{description}
\item [\rm (1)] If $A\cap B=\emptyset$, then $G^{\xi(A)\pi(B)}=G^{\pi(B)\xi(A)}$.
\item [\rm (2)] Twisted duality acts disjointly on the components of $G$.
\end{description}
\end{proposition}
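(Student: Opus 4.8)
The plan is to reduce both parts to operations acting on single edges, where everything is transparent. For (1), I would first establish the following lemma: for every $\xi \in \mathcal{B}$ and every $A \subseteq E(G)$, the operation $G \mapsto G^{\xi(A)}$ is the composition, taken in any order, of the single-edge operations $G \mapsto G^{\xi(e)}$ over $e \in A$; in particular these single-edge operations pairwise commute. For $\xi = \tau$ this is immediate, since inserting half-twists into distinct edges are independent moves; for $\xi = \delta$ it is the standard fact that a partial dual over an edge set can be performed one edge at a time. For a general $\xi$, write a word $w = w_1 \cdots w_n$ over $\{\delta, \tau\}$ representing $\xi$, so that $G^{\xi(A)}$ is the composition of the layers $w_i(A)$; each layer decomposes edge-by-edge by the two base cases, and the single-edge operations $\delta(e)$ and $\tau(f)$ commute whenever $e \neq f$ (this is precisely the $S_3^{|E(G)|}$ structure of the $\langle \delta, \tau\rangle$-action, together with the commutativity of $\tau(e)$ and $\delta(f)$ observed above). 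Hence one may interleave the layers and collect, for each fixed $e \in A$, the factors $w_1(e), \ldots, w_n(e)$ into a single block, which is $G \mapsto G^{\xi(e)}$; this proves the lemma. Part (1) follows at once: $G^{\xi(A)\pi(B)}$ is the composition of the blocks indexed by $A$ (for $\xi$) and by $B$ (for $\pi$); since $A \cap B = \emptyset$ no edge index is shared, so all blocks pairwise commute and reordering gives $G^{\pi(B)\xi(A)}$.

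For (2), the key point is locality: each single-edge operation $G \mapsto G^{\delta(e)}$ and $G \mapsto G^{\tau(e)}$ alters only the connected component of $G$ containing $e$ --- for $\tau(e)$ because the half-twist lives in the ribbon $e$ alone, and for $\delta(e)$ because, by Definition \ref{def01}, only the edge-ribbon $e$ and the vertex-discs meeting it are rebuilt while the rest of $G$ is untouched. Equivalently, if $H = H_1 \sqcup H_2$ with $e \in E(H_1)$ and $O$ is $\delta(e)$ or $\tau(e)$, then $H^{O} = H_1^{O} \sqcup H_2$. Now decompose a twisted-dual operation on $G$ (Definition \ref{def02}(3)) into single-edge operations as in (1); grouping these by the component of $G$ to which their edge belongs --- legitimate because operations on edges of different components commute --- and inducting on the number of single-edge factors yields $G^{O} = G_1^{O_1} \sqcup \cdots \sqcup G_k^{O_k}$, where $G = G_1 \sqcup \cdots \sqcup G_k$ is the decomposition into components and $O_j$ is the part of $O$ supported on $E(G_j)$.

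I expect the main obstacle to be the bookkeeping in the lemma for (1): one must verify carefully that a word for $\xi$ applied uniformly to $A$ can be rearranged into per-edge blocks, which rests on both the edge-by-edge decomposition of the partial-dual layers and the commutativity of single-edge operations on distinct edges; once those are in hand the rest is routine reindexing. Part (2) is then essentially a corollary, the only subtlety being that the component containing a given edge is stable under the operations, which is immediate from the locality statement together with the commutativity already used.
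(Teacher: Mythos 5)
This proposition is quoted from Ellis-Monaghan and Moffatt \cite{EM1}; the paper you are reading gives no proof of its own, so your attempt can only be compared with the argument in the cited source. Your route --- decompose $G^{\xi(A)}$ into single-edge operations $\xi(e)$, $e\in A$, note that single-letter operations on distinct edges commute, and for (2) add the locality observation that $\delta(e)$ and $\tau(e)$ only rebuild the component containing $e$ --- is essentially the argument underlying the ribbon group action in \cite{EM1}, and it is correct in outline. One caution: at the key step you justify the commutation of single-edge operations on distinct edges by appealing to ``the $S_3^{|E(G)|}$ structure of the $\langle\delta,\tau\rangle$-action''; that product structure is itself \emph{established} by exactly these commutations, so as phrased this clause is circular. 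It is easily repaired by arguing directly from the definitions: $\tau(e)$ and $\tau(f)$ commute trivially (half-twists live in disjoint ribbons); $\delta(e)$ and $\delta(f)$ commute by Chmutov's order-independence of partial duality, i.e.\ $(G^{\delta(e)})^{\delta(f)}=G^{\delta(\{e,f\})}=(G^{\delta(f)})^{\delta(e)}$; and $\delta(e)$ with $\tau(f)$, $e\neq f$, is the commutativity observed in Section~2. With those three base cases proved (or cited) independently, your per-edge block rearrangement for (1) and the component-wise decomposition for (2) go through as you describe.
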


\section{The vertex polynomial of ribbon graphs}

To systematically investigate the impact of ribbon subgroup action on vertex count in ribbon graphs, we introduce the following vertex polynomial, defined via orbits of the ribbon subgroup action.

\begin{definition}
For $\bullet \in \left\{\langle \delta \rangle, \langle \tau \rangle, \langle \delta\tau \rangle, \langle \tau\delta\rangle, \langle\tau\delta\tau \rangle, \langle \delta, \tau \rangle\right\}$, the vertex-$\bullet$ polynomial of a ribbon graph $G$ is defined by
$$
P_{\bullet}(G, x) = \sum_{H \in \mathrm{Orb}_{\bullet}(G)} x^{v(H)}.
$$
\end{definition}

\begin{remark}\label{rem02}
Let $G$ be a ribbon graph.
Since the half-twist operation $\tau$ preserves the vertex count $($i.e., $v\left(G^{\tau(A)}\right) = v(G)$ for any subset $A \subseteq E(G)$ $)$, it follows that
$$
P_{\langle \tau \rangle}(G, x) = 2^{e(G)} x^{v(G)}.
$$
Furthermore, the orbit coincidence $\mathrm{Orb}_{\langle \delta\tau \rangle}(G) = \mathrm{Orb}_{\langle \tau\delta \rangle}(G)$ implies
$$
P_{\langle \delta\tau \rangle}(G, x) = P_{\langle \tau\delta \rangle}(G, x).
$$
Consequently, it suffices to study the vertex-$\bullet$ polynomial for the cases  $$\bullet \in \big\{\langle \delta \rangle, \langle \delta\tau \rangle, \langle \tau\delta\tau \rangle, \langle \delta, \tau \rangle\big\}$$ in subsequent analyses. By Definition \ref{def02} the vertex polynomial admits the following equivalent characterizations:
\begin{description}
  \item [\rm (1)] $P_{\langle\xi\rangle}(G, x) = \sum\limits_{A\subseteq E(G)} x^{v(G^{\xi(A)})}$ for $\xi \in \{\delta, \tau\delta\tau\}$.
  \item [\rm (2)] $P_{\langle\delta\tau\rangle}(G, x) = \sum\limits_{\biguplus_{i=1}^{3} A_i = E(G)} x^{v\left(G^{1(A_1)\tau\delta(A_2)\delta\tau(A_3)}\right)}$.
  \item [\rm (3)]
 $P_{\langle\delta, \tau\rangle}(G, x)
= \sum\limits_{\biguplus_{i=1}^{6} A_i = E(G)} x^{v\left(G^{1(A_1)\delta(A_2)\tau(A_3)\tau\delta(A_4)\delta\tau(A_5)\tau\delta\tau(A_6)}\right)}$.
\end{description}
\end{remark}

\begin{lemma}\label{lem02}
Let $G$ be a ribbon graph and $A\subseteq E(G)$. Then
$$v\left(G^{\delta(A)}\right)=v\left(G^{\tau\delta(A)}\right)=f\left(G\setminus A^c\right)$$ and $$v\left(G^{\tau\delta\tau(A)}\right)=v\left(G^{\delta\tau(A)}\right)=f\left(G^{\tau(A)}\setminus A^c\right)=f\left(G^{\times}\setminus A^c\right).$$
\end{lemma}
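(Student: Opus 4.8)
The plan is to derive both chains of equalities from two ingredients already in place: the defining property of partial duality (Definition~\ref{def01}) and the vertex-invariance of $\tau$ (Remark~\ref{rem02}). The one observation I would isolate first is a ``relative'' version of the partial-dual definition: for \emph{every} ribbon graph $H$ and every $A\subseteq E(H)$, the vertex-discs of $H^{\delta(A)}$ are precisely the discs glued, one per boundary component, along the boundary of the spanning ribbon subgraph $(V(H),A)=H\setminus A^{c}$. Hence
$$v\!\left(H^{\delta(A)}\right)=f\!\left(H\setminus A^{c}\right).\qquad(\star)$$
Taking $H=G$ in $(\star)$ gives $v\!\left(G^{\delta(A)}\right)=f\!\left(G\setminus A^{c}\right)$ at once.

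Next I would unwind the operator-word convention, which reads $w(A)$ from right to left. Thus $G^{\tau\delta(A)}=\bigl(G^{\delta(A)}\bigr)^{\tau(A)}$, and since $\tau$ only inserts half-twists into edge-ribbons and leaves the vertex-discs untouched, Remark~\ref{rem02} gives $v\!\left(G^{\tau\delta(A)}\right)=v\!\left(G^{\delta(A)}\right)=f\!\left(G\setminus A^{c}\right)$. This settles the first chain.

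For the second chain I would first handle $\delta\tau$: here $G^{\delta\tau(A)}=\bigl(G^{\tau(A)}\bigr)^{\delta(A)}$, and because $E\!\left(G^{\tau(A)}\right)=E(G)$, applying $(\star)$ with $H=G^{\tau(A)}$ yields $v\!\left(G^{\delta\tau(A)}\right)=f\!\left(G^{\tau(A)}\setminus A^{c}\right)$. Then $G^{\tau\delta\tau(A)}=\bigl(G^{\delta\tau(A)}\bigr)^{\tau(A)}$, so Remark~\ref{rem02} again preserves the vertex count and gives $v\!\left(G^{\tau\delta\tau(A)}\right)=f\!\left(G^{\tau(A)}\setminus A^{c}\right)$. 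To finish, I would note that half-twisting an edge and deleting a different edge are independent local modifications, so $G^{\tau(A)}\setminus A^{c}=\bigl(G\setminus A^{c}\bigr)^{\tau(A)}$; since $A=E\!\left(G\setminus A^{c}\right)$ this equals $\bigl(G\setminus A^{c}\bigr)^{\times}=G^{\times}\setminus A^{c}$. Hence $G^{\tau(A)}\setminus A^{c}$ and $G^{\times}\setminus A^{c}$ are literally the same ribbon graph, so $f\!\left(G^{\tau(A)}\setminus A^{c}\right)=f\!\left(G^{\times}\setminus A^{c}\right)$.

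I do not anticipate a real obstacle: the argument is essentially bookkeeping around the definitions. The two points that need care are (i) respecting the right-to-left reading of the exponent $w(A)$ when peeling the outermost $\tau(A)$ off $G^{\tau\delta(A)}$ and off $G^{\tau\delta\tau(A)}$, and (ii) observing that the half-twists $\tau$ would place on the edges of $A^{c}$ are discarded the moment those edges are deleted, which is exactly what makes the final equality an identity of ribbon graphs rather than merely of boundary-component counts.
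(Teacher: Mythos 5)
Your proposal is correct and follows essentially the same route as the paper: read the operator word right to left, use the defining property of $\delta$ (that $v\bigl(H^{\delta(A)}\bigr)=f\bigl(H\setminus A^{c}\bigr)$), and peel off the vertex-preserving $\tau(A)$ layers. Your only addition is an explicit justification of the final equality $f\bigl(G^{\tau(A)}\setminus A^{c}\bigr)=f\bigl(G^{\times}\setminus A^{c}\bigr)$ via the ribbon-graph identity $G^{\tau(A)}\setminus A^{c}=G^{\times}\setminus A^{c}$, which the paper states without comment.
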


\begin{proof}
By Definition \ref{def01}, the duality operation
$\delta$ satisfies $$v\left(G^{\delta(A)}\right)=f(G\setminus A^c).$$ Since the half-twist operation $\tau$ preserves the vertex count when applied to $A$, we deduce $$v\left(G^{\tau\delta(A)}\right)=v\left((G^{\delta(A)})^{\tau(A)}\right)=v\left(G^{\delta(A)}\right)=f(G\setminus A^c)$$
and
\begin{eqnarray*}
v\left(G^{\tau\delta\tau(A)}\right)&=&v\left((G^{\delta\tau(A)})^{\tau(A)}\right)=v\left(G^{\delta\tau(A)}\right)=v\left((G^{\tau(A)})^{\delta(A)}\right)\\
&=&f\left(G^{\tau(A)}\setminus A^c\right)=f\left(G^{\times}\setminus A^c\right).
\end{eqnarray*}
\end{proof}

\begin{lemma}[\cite{GJY}]\label{lem03}
Let $G$ be a ribbon graph. Then $H\in \mathrm{Orb}_{\langle\delta, \tau\rangle}(G)$ if and only if there exist $B_1, B_2, B_3\subseteq E(G)$ such that $$H=G^{\tau(B_1)\delta(B_2)\tau(B_3)}.$$
Moreover, if
\[
  H = G^{1(A_1)\delta(A_2)\tau(A_3)\tau\delta(A_4)\delta\tau(A_5)\tau\delta\tau(A_6)},
\]
where $\biguplus_{i=1}^{6} A_i = E(G)$,
then
\[
  B_1 = A_3 \cup A_5 \cup A_6, \quad
  B_2 = A_2 \cup A_4 \cup A_5 \cup A_6, \quad
  B_3 = A_4 \cup A_6.
\]
\end{lemma}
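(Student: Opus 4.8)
\emph{Proof proposal.}
The plan is to build an explicit dictionary between the two normal forms for an element of $\mathrm{Orb}_{\langle\delta,\tau\rangle}(G)$: the six-part form of Definition~\ref{def02}(3), in which each edge is tagged by one of the six elements $1,\delta,\tau,\tau\delta,\delta\tau,\tau\delta\tau$ of $\mathcal{B}\cong S_3$, and the triple-subset form $G^{\tau(B_1)\delta(B_2)\tau(B_3)}$, in which each edge is tagged by the set $\{\,i : e\in B_i\,\}\subseteq\{1,2,3\}$. The bridge is the fact that $G^{\tau(B_1)\delta(B_2)\tau(B_3)}$ can be evaluated edge by edge even though $B_1,B_2,B_3$ overlap: I would partition $E(G)$ into the eight Venn cells $R_S=\{\,e\in E(G): \{i: e\in B_i\}=S\,\}$ for $S\subseteq\{1,2,3\}$, and use that $\delta$ and $\tau$ applied to disjoint edge subsets commute to sort the word $\tau(B_1)\delta(B_2)\tau(B_3)$ so that all operations touching a fixed cell $R_S$ sit next to each other. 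On $R_S$ the operations act, in order, as $\tau$ if $1\in S$, then $\delta$ if $2\in S$, then $\tau$ if $3\in S$; composing these in $\mathcal{B}$ and unwinding the recursive definition of $G^{w(A)}$ yields the table $R_{\emptyset}\mapsto 1$, $R_{\{1\}}\mapsto\tau$, $R_{\{2\}}\mapsto\delta$, $R_{\{3\}}\mapsto\tau$, $R_{\{1,2\}}\mapsto\delta\tau$, $R_{\{2,3\}}\mapsto\tau\delta$, $R_{\{1,3\}}\mapsto 1$, $R_{\{1,2,3\}}\mapsto\tau\delta\tau$.

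With this table the ``if'' direction is short: given arbitrary $B_1,B_2,B_3\subseteq E(G)$ I would set $A_1=R_{\emptyset}\cup R_{\{1,3\}}$, $A_2=R_{\{2\}}$, $A_3=R_{\{1\}}\cup R_{\{3\}}$, $A_4=R_{\{2,3\}}$, $A_5=R_{\{1,2\}}$, $A_6=R_{\{1,2,3\}}$. These partition $E(G)$, and the regrouping above gives $G^{\tau(B_1)\delta(B_2)\tau(B_3)}=G^{1(A_1)\delta(A_2)\tau(A_3)\tau\delta(A_4)\delta\tau(A_5)\tau\delta\tau(A_6)}$, which lies in $\mathrm{Orb}_{\langle\delta,\tau\rangle}(G)$ by Definition~\ref{def02}(3). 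For the ``only if'' direction together with the explicit formulas I would run the dictionary backwards: starting from $H=G^{1(A_1)\delta(A_2)\tau(A_3)\tau\delta(A_4)\delta\tau(A_5)\tau\delta\tau(A_6)}$ with $\biguplus_{i=1}^{6}A_i=E(G)$, put $B_1=A_3\cup A_5\cup A_6$, $B_2=A_2\cup A_4\cup A_5\cup A_6$, $B_3=A_4\cup A_6$; using pairwise disjointness of the $A_i$ I would check that an edge of $A_1$ lies in none of $B_1,B_2,B_3$, an edge of $A_2$ in $B_2$ only, an edge of $A_3$ in $B_1$ only, an edge of $A_4$ in $B_2,B_3$ only, an edge of $A_5$ in $B_1,B_2$ only, and an edge of $A_6$ in all three. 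By the table these membership patterns reproduce exactly the operations $1,\delta,\tau,\tau\delta,\delta\tau,\tau\delta\tau$ on $A_1,\dots,A_6$ (this is the choice for which the cells $R_{\{3\}}$ and $R_{\{1,3\}}$ are empty), so $G^{\tau(B_1)\delta(B_2)\tau(B_3)}=H$, which simultaneously gives the first assertion and the displayed formulas.

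The arithmetic is routine once the conventions are fixed, so the step I would pin down first is the \emph{order convention}. The paper writes $G^{w_1w_2\cdots w_n(A)}$ with the rightmost letter acting first, but writes $G^{\xi(A)\pi(B)}$ with $\xi(A)$ acting first; hence in $G^{\tau(B_1)\delta(B_2)\tau(B_3)}$ it is $\tau(B_1)$ that is applied first and $\tau(B_3)$ last. Tracking this reversal correctly is precisely what determines, e.g., whether the cell $R_{\{1,2\}}$ receives $\delta\tau$ or $\tau\delta$, hence whether it corresponds to $A_5$ or $A_4$; I expect a careless treatment of this to be the main pitfall. The only other point needing genuine justification is the legitimacy of the edge-by-edge evaluation itself: since $B_1,B_2,B_3$ overlap, the word $\tau(B_1)\delta(B_2)\tau(B_3)$ does not split directly into single-edge operations, and the rearrangement really uses the commutativity of $\delta$ and $\tau$ on disjoint edge subsets. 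Everything else is a finite verification over the eight membership patterns.
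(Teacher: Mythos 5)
Your argument is correct: you handle the order conventions properly (in $G^{\tau(B_1)\delta(B_2)\tau(B_3)}$ it is $\tau(B_1)$ that acts first), your Venn-cell table of composite operations is right, and checking that edges of $A_1,\dots,A_6$ have membership patterns $\emptyset,\{2\},\{1\},\{2,3\},\{1,2\},\{1,2,3\}$ in $(B_1,B_2,B_3)$ reproduces exactly the stated formulas, with the lumping of the cells $R_{\{3\}}$ and $R_{\{1,3\}}$ giving the converse direction. The paper itself gives no proof (the lemma is imported from \cite{GJY}), but your route --- writing each element of $S_3\cong\mathcal{B}$ in the normal form $\tau^{a}\delta^{b}\tau^{c}$ and rearranging via the edge-wise action and the commutativity of operations on disjoint edge sets recalled in Section~2 --- is essentially the standard argument behind the cited result, so there is nothing to fix.
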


Remark \ref{rem02}, Lemmas \ref{lem02} and \ref{lem03} establish that the vertex polynomial is equivalently expressible via boundary component counts, a relationship formalized in Theorem \ref{the01}.

\begin{theorem}\label{the01}
Let $G$ be a ribbon graph. Then the following hold:
\begin{description}
  \item [\rm (1)] $P_{\langle\delta\rangle}(G, x) = \sum\limits_{A\subseteq E(G)} x^{f(G \setminus A^c)}$.
  \item [\rm (2)] $P_{\langle\tau\delta\tau\rangle}(G, x) = \sum\limits_{A\subseteq E(G)} x^{f\left(G^{\times} \setminus A^c\right)}$.
  \item [\rm (3)] $P_{\langle\delta\tau\rangle}(G, x) = \sum\limits_{\biguplus_{i=1}^{3} A_i = E(G)} x^{f\left(G^{\tau(A_3)} \setminus A_1\right)}$.
  \item [\rm (4)]
 $P_{\langle\delta, \tau\rangle}(G, x)
= \sum\limits_{\biguplus_{i=1}^{6} A_i = E(G)} x^{f\left(G^{\tau(A_5 \cup A_6)} \setminus (A_1 \cup A_3)\right)}$.
\end{description}
\end{theorem}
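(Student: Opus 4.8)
The plan is to derive each of the four identities by starting from the orbit-based definitions of the vertex polynomials recorded in Remark \ref{rem02} and then rewriting the vertex counts as boundary component counts using Lemmas \ref{lem02} and \ref{lem03}. Parts (1) and (2) are immediate: by Remark \ref{rem02}(1) we have $P_{\langle\delta\rangle}(G,x) = \sum_{A\subseteq E(G)} x^{v(G^{\delta(A)})}$ and $P_{\langle\tau\delta\tau\rangle}(G,x) = \sum_{A\subseteq E(G)} x^{v(G^{\tau\delta\tau(A)})}$, so applying the substitutions $v(G^{\delta(A)}) = f(G\setminus A^c)$ and $v(G^{\tau\delta\tau(A)}) = f(G^{\times}\setminus A^c)$ from Lemma \ref{lem02} finishes both cases directly.

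For part (3), I would begin with $P_{\langle\delta\tau\rangle}(G,x) = \sum_{\biguplus_{i=1}^{3}A_i = E(G)} x^{v(G^{1(A_1)\tau\delta(A_2)\delta\tau(A_3)})}$ from Remark \ref{rem02}(2). The key step is to compute $v(G^{1(A_1)\tau\delta(A_2)\delta\tau(A_3)})$. Since the three operator words act on disjoint edge subsets, they commute, and the vertex-disc structure of the resulting ribbon graph is governed by the boundary components of an appropriate spanning ribbon subgraph. Concretely, edges in $A_1$ are left alone, edges in $A_2$ receive the word $\tau\delta$, and edges in $A_3$ receive $\delta\tau$; I expect to show, by tracking how $\delta$ creates vertex-discs from boundary components while $\tau$ only adds half-twists, that the vertex count equals $f$ of the spanning ribbon subgraph on the edges \emph{not} dualized, after the relevant half-twists are applied. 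Using that $\delta$ appears (at the rightmost, i.e.\ first-applied, position on each edge) exactly for edges in $A_2 \cup A_3$ and that the half-twist on $A_3$ is applied \emph{before} its dual while on $A_2$ it is applied \emph{after}, one gets that the vertices of the result are the boundary components of $G^{\tau(A_3)}$ restricted to the non-dualized edges $A_1$, i.e.\ $v = f(G^{\tau(A_3)} \setminus A_1)$. This is exactly the claimed exponent, so substituting yields (3).

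For part (4), I would start from Remark \ref{rem02}(3) and invoke Lemma \ref{lem03}, which rewrites the generic element $G^{1(A_1)\delta(A_2)\tau(A_3)\tau\delta(A_4)\delta\tau(A_5)\tau\delta\tau(A_6)}$ in the normal form $G^{\tau(B_1)\delta(B_2)\tau(B_3)}$ with $B_1 = A_3\cup A_5\cup A_6$, $B_2 = A_2\cup A_4\cup A_5\cup A_6$, $B_3 = A_4\cup A_6$. The vertex count of $G^{\tau(B_1)\delta(B_2)\tau(B_3)}$ is unaffected by the outermost $\tau(B_1)$, so $v(G^{\tau(B_1)\delta(B_2)\tau(B_3)}) = v(G^{\delta(B_2)\tau(B_3)}) = v((G^{\tau(B_3)})^{\delta(B_2)}) = f(G^{\tau(B_3)}\setminus B_2^c)$, reasoning exactly as in Lemma \ref{lem02}. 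It then remains to identify $B_3$ and $B_2^c$ in terms of the original $A_i$: we have $B_3 = A_4\cup A_6$, and $B_2^c = E(G)\setminus(A_2\cup A_4\cup A_5\cup A_6) = A_1\cup A_3$. So $v = f(G^{\tau(A_4\cup A_6)}\setminus(A_1\cup A_3))$. Here I anticipate a small discrepancy to reconcile with the stated formula, which has $G^{\tau(A_5\cup A_6)}$ rather than $G^{\tau(A_4\cup A_6)}$; I would resolve this by a symmetry/relabeling argument over the disjoint partition (the roles of $A_4$ and $A_5$ are interchangeable in the sum since $\tau\delta$ and $\delta\tau$ can be swapped via reindexing, or via the identity relating $f(G^{\tau(B_3)}\setminus B_2^c)$ under the involution), showing that summing over all ordered partitions $\biguplus_{i=1}^{6}A_i = E(G)$ gives the same total with either choice. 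I expect this bookkeeping step — pinning down exactly which subset gets the half-twist in the boundary-count reformulation, and verifying it matches the stated indices — to be the main obstacle; everything else is a direct substitution from the preceding lemmas.
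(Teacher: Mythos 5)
Parts (1) and (2) are fine and match the paper, and your outline for (3), although only sketched (the paper instead normalizes via Lemma \ref{lem03} with $B_1=A_3$, $B_2=A_2\cup A_3$, $B_3=A_2$), does arrive at the correct exponent $f\left(G^{\tau(A_3)}\setminus A_1\right)$; note though that your phrase ``the spanning ribbon subgraph on the edges not dualized'' contradicts your own formula, which deletes $A_1$ and keeps the dualized edges $A_2\cup A_3$.

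The genuine problem is in (4), and it comes from misreading the composition convention. The paper defines $G^{\xi(A)\pi(B)}\coloneqq (G^{\xi(A)})^{\pi(B)}$, so in the normal form $G^{\tau(B_1)\delta(B_2)\tau(B_3)}$ of Lemma \ref{lem03} the twist $\tau(B_1)$ is applied \emph{first} and $\tau(B_3)$ \emph{last}. Only the last-applied twist can be discarded without affecting the vertex count; your step ``unaffected by the outermost $\tau(B_1)$'' drops the first-applied twist, which is invalid (for a single non-orientable loop $e$ with $A_5=\{e\}$, one has $v\left(G^{\delta\tau(e)}\right)=f\left(G^{\tau(e)}\right)=2$, while your expression $f\left(G^{\tau(A_4\cup A_6)}\setminus (A_1\cup A_3)\right)=f(G)=1$). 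Your subsequent rewriting $v\left(G^{\delta(B_2)\tau(B_3)}\right)=v\left((G^{\tau(B_3)})^{\delta(B_2)}\right)$ is likewise unjustified, since $B_3\subseteq B_2$ and overlapping operations do not commute. Consequently your termwise identity $v(H)=f\left(G^{\tau(A_4\cup A_6)}\setminus(A_1\cup A_3)\right)$ is false, and the ``discrepancy'' you noticed ($A_4\cup A_6$ versus $A_5\cup A_6$) is a symptom of this error, not a relabeling artifact: the $A_4\leftrightarrow A_5$ reindexing only shows that the two sums you wrote are equal to each other, not that either equals $\sum x^{v(H)}$. The correct route (the paper's) is to drop the last-applied $\tau(B_3)$, obtaining $v(H)=f\left(G^{\tau(B_1)}\setminus B_2^{\,c}\right)=f\left(G^{\tau(A_3\cup A_5\cup A_6)}\setminus(A_1\cup A_3)\right)$, and then to make the one observation missing from your proposal: half-twisting the edges of $A_3$ is irrelevant because those edges are deleted, so the exponent equals $f\left(G^{\tau(A_5\cup A_6)}\setminus(A_1\cup A_3)\right)$, which is exactly the stated formula.
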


\begin{proof}
\begin{description}
   \item [(1)-(2)]
   For $\xi \in \{\delta, \tau\delta\tau\}$, the orbit $\mathrm{Orb}_{\langle\xi\rangle}(G)$ consists of all ribbon graphs $G^{\xi(A)}$ with $A \subseteq E(G)$. By Lemma \ref{lem02}, the vertex count $v(G^{\xi(A)})$ corresponds to the face count $f(G \setminus A^c)$ for $\xi = \delta$, and to $f(G^{\times} \setminus A^c)$ for $\xi = \tau\delta\tau$. Thus, by Remark \ref{rem02}
   \begin{align*}
   &P_{\langle\delta\rangle}(G, x) = \sum_{A \subseteq E(G)} x^{f(G \setminus A^c)}, \\
&P_{\langle\tau\delta\tau\rangle}(G, x) = \sum_{A \subseteq E(G)} x^{f(G^{\times} \setminus A^c)}.
   \end{align*}

   \item [(3)]
   The orbit representatives under $\langle\delta\tau\rangle$ are characterized by triples $(A_1, A_2, A_3)$ partitioning $E(G)$. Applying Lemma \ref{lem03} with substitutions $B_1 \coloneqq A_3$, $B_2 \coloneqq A_2 \cup A_3$, and $B_3 \coloneqq A_2$, we simplify the vertex count via:
   \[
   v\left(G^{1(A_1)\tau\delta(A_2)\delta\tau(A_3)}\right) =v\left(G^{\tau(B_1)\delta(B_2)\tau(B_3)}\right)= v\left(G^{\tau(B_1)\delta(B_2)}\right) = f\left(G^{\tau(A_3)} \setminus A_1\right),
   \]
   where the last equality follows from ${B_2}^c = A_1$ and Lemma \ref{lem02}.

   \item [(4)]
   The group $\langle\delta, \tau\rangle$ has six elements, corresponding to the six edge partitions $A_1, \ldots, A_6$. Let $B_1 \coloneqq A_3 \cup A_5 \cup A_6$, $B_2 \coloneqq A_2 \cup A_4 \cup A_5 \cup A_6$, and $B_3 \coloneqq A_4 \cup A_6$. By Lemma \ref{lem03},
\begin{align*}
&v\left(G^{1(A_1)\delta(A_2)\tau(A_3)\tau\delta(A_4)\delta\tau(A_5)\tau\delta\tau(A_6)}\right)=v\left(G^{\tau(B_1)\delta(B_2)\tau(B_3)}\right) =v\left(G^{\tau(B_1)\delta(B_2)}\right)\\&=f\left(G^{\tau(A_3\cup A_5\cup A_6)}\setminus {(A_1\cup A_3)}\right)=f\left(G^{\tau(A_5 \cup A_6)} \setminus (A_1 \cup A_3)\right).
   \end{align*}

\end{description}
\end{proof}

The following proposition establishes fundamental connections between the vertex polynomial and distinct ribbon subgroup actions of a ribbon graph:

\begin{proposition}\label{pro01}
Let $G$ be a ribbon graph. Then
\begin{description}
  \item [\rm (1)] $P_{\langle\tau\delta\tau\rangle}(G, x) = P_{\langle\delta\rangle}\left(G^{\times}, x\right).$
  \item [\rm (2)] $P_{\langle\delta, \tau\rangle}(G, x) = 2^{e(G)}P_{\langle\delta\tau\rangle}(G, x).$
\end{description}
\end{proposition}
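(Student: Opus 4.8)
\medskip

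The plan is to prove each part by exhibiting an explicit correspondence between orbit representatives and comparing vertex counts term by term, using Theorem~\ref{the01} to pass to boundary-component formulations when convenient.

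\medskip

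For part (1), the cleanest route is to work directly from the definition $P_{\bullet}(G,x)=\sum_{H\in\mathrm{Orb}_{\bullet}(G)}x^{v(H)}$. I would first observe that $\mathrm{Orb}_{\langle\tau\delta\tau\rangle}(G)=\{G^{\tau\delta\tau(A)}\mid A\subseteq E(G)\}$ and $\mathrm{Orb}_{\langle\delta\rangle}(G^{\times})=\{(G^{\times})^{\delta(A)}\mid A\subseteq E(G)\}$, so both polynomials are sums over the same index set $A\subseteq E(G)$. It then suffices to check $v(G^{\tau\delta\tau(A)})=v((G^{\times})^{\delta(A)})$ for each fixed $A$. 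By Lemma~\ref{lem02} the left side equals $f(G^{\times}\setminus A^c)$, and since $\delta$ applied to $A$ satisfies $v(H^{\delta(A)})=f(H\setminus A^c)$ for any ribbon graph $H$ (Definition~\ref{def01}), taking $H=G^{\times}$ gives $v((G^{\times})^{\delta(A)})=f(G^{\times}\setminus A^c)$ as well. Matching the two sums termwise yields the identity; equivalently one can just quote Theorem~\ref{the01}(1) applied to $G^{\times}$ and Theorem~\ref{the01}(2) applied to $G$, which give literally the same sum.

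\medskip

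For part (2), the idea is that passing from the three-part partitions indexing $\mathrm{Orb}_{\langle\delta\tau\rangle}(G)$ to the six-part partitions indexing $\mathrm{Orb}_{\langle\delta,\tau\rangle}(G)$ amounts to refining each block $A_i$ ($i=1,2,3$) into two blocks according to whether an extra half-twist is applied, and by Lemma~\ref{lem02} the half-twist on a block does not affect the vertex count. Concretely, I would use the boundary-component formulas of Theorem~\ref{the01}(3)--(4): the $\langle\delta,\tau\rangle$ sum runs over $\biguplus_{i=1}^{6}A_i=E(G)$ with exponent $f(G^{\tau(A_5\cup A_6)}\setminus(A_1\cup A_3))$. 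I would set up the map sending $(A_1,\dots,A_6)$ to the triple $(A_1\cup A_3,\ A_2\cup A_4,\ A_5\cup A_6)$ — reading off which of the three "roles" (deleted, untouched-in-faces, twisted-and-faces) each edge plays — and check that the exponent depends only on this triple, since $f(G^{\tau(A_5\cup A_6)}\setminus(A_1\cup A_3))$ is manifestly a function of $A_1\cup A_3$ and $A_5\cup A_6$ alone. For a fixed target triple $(C_1,C_2,C_3)$, the number of six-tuples mapping to it is $2^{|C_1|}\cdot 2^{|C_2|}\cdot 2^{|C_3|}=2^{e(G)}$, since each edge of $C_1$ may lie in $A_1$ or $A_3$, each edge of $C_2$ in $A_2$ or $A_4$, and each edge of $C_3$ in $A_5$ or $A_6$. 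Summing the identical contributions $x^{f(G^{\tau(C_3)}\setminus C_1)}$ — which by Theorem~\ref{the01}(3) is exactly the $\langle\delta\tau\rangle$ term for the triple $(C_1,C_2,C_3)$ — over all target triples gives $P_{\langle\delta,\tau\rangle}(G,x)=2^{e(G)}P_{\langle\delta\tau\rangle}(G,x)$.

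\medskip

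I expect the main obstacle in part (2) to be bookkeeping rather than conceptual depth: one must verify carefully that the fiber-counting is exactly $2^{e(G)}$ and not something depending on the block sizes, i.e.\ that the three factors $2^{|C_1|},2^{|C_2|},2^{|C_3|}$ really multiply to $2^{|C_1|+|C_2|+|C_3|}=2^{e(G)}$ because $C_1,C_2,C_3$ partition $E(G)$ — and that the exponent genuinely has no dependence on the finer data $(A_1,A_3)$ versus $(A_1\cup A_3)$ and $(A_5,A_6)$ versus $(A_5\cup A_6)$. The latter is guaranteed by the $\tau$-invariance of vertex counts in Lemma~\ref{lem02}, which is precisely why the Theorem~\ref{the01}(4) formula already collapsed $A_3$ and $A_4$ out of the twist argument; I would emphasize this point so the reader sees the reduction is forced rather than ad hoc.
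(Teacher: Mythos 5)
Your proposal is correct and follows essentially the same route as the paper: part (1) is exactly the termwise comparison via Theorem~\ref{the01}(1)--(2) (equivalently Lemma~\ref{lem02}) together with $E(G^{\times})=E(G)$, and part (2) is the same coarsening of the six-block partition to the triple $(A_1\cup A_3,\,A_2\cup A_4,\,A_5\cup A_6)$ with fiber count $2^{|C_1|+|C_2|+|C_3|}=2^{e(G)}$ and exponent depending only on the triple. No gaps; only the labelling of the three blocks differs from the paper's $B_1,B_2,B_3$.
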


\begin{proof}
\begin{description}
  \item [(1)]
  By Theorem \ref{the01} (1)-(2), we observe:
  \[
  P_{\langle\tau\delta\tau\rangle}(G, x) = \sum_{A \subseteq E(G)} x^{f\left(G^{\times} \setminus A^c\right)}.
  \]
  Since $E(G^{\times}) = E(G)$ (as Petrie duality preserves edges), the summation over $A \subseteq E(G)$ is equivalent to $A \subseteq E(G^{\times})$. Thus,
  \[
  P_{\langle\tau\delta\tau\rangle}(G, x) = \sum_{A \subseteq E(G^{\times})} x^{f(G^{\times} \setminus A^c)} = P_{\langle\delta\rangle}(G^{\times}, x).
  \]

  \item [(2)]
  From Theorem \ref{the01} (3)-(4), we reindex the six disjoint subsets $A_1, \ldots, A_6$ by defining:
  \[
  B_1 \coloneqq A_5 \cup A_6, \quad B_2 \coloneqq A_1 \cup A_3, \quad B_3 \coloneqq A_2 \cup A_4.
  \]
  Each edge in $B_i$ ($i=1,2,3$) has two independent choices in the original partition (e.g., $B_1$ combines $A_5$ and $A_6$), contributing a factor of $2^{|B_i|}$ per subset. Hence,
  \begin{align*}
  P_{\langle\delta, \tau\rangle}(G, x)
    &= \sum_{\biguplus_{i=1}^{3} B_i = E(G)} 2^{|B_1| + |B_2| + |B_3|} x^{f\left(G^{\tau(B_1)} \setminus B_2\right)} \\
    &= 2^{e(G)} \sum_{\biguplus_{i=1}^{3} B_i = E(G)} x^{f\left(G^{\tau(B_1)} \setminus B_2\right)} \\
    &= 2^{e(G)} P_{\langle\delta\tau\rangle}(G,x).
  \end{align*}
\end{description}
\end{proof}

The vertex polynomial admits recursive relations through edge deletion, contraction, and twisted contraction, whose combinatorial symmetries originate from distinct ribbon subgroup actions:

\begin{theorem}\label{the03}
Let $G$ be a ribbon graph and $e \in E(G)$. Then
\begin{description}
  \item[\rm (1)] $P_{\langle\delta\rangle}(G,x) = P_{\langle\delta\rangle}(G \setminus e,x) + P_{\langle\delta\rangle}(G / e, x).$

  \item[\rm (2)] $P_{\langle\tau\delta\tau\rangle}(G,x) = P_{\langle\tau\delta\tau\rangle}(G \setminus e, x) + P_{\langle\tau\delta\tau\rangle}\left(G^{\tau(e)} / e, x\right).$

  \item[\rm (3)] $P_{\langle\delta\tau\rangle}(G,x) = P_{\langle\delta\tau\rangle}(G \setminus e,x) + P_{\langle\delta\tau\rangle}(G / e,x) + P_{\langle\delta\tau\rangle}\left(G^{\tau(e)} / e,x\right).$

  \item[\rm (4)] $P_{\langle\delta,\tau\rangle}(G,x) = 2\left[P_{\langle\delta,\tau\rangle}(G \setminus e,x) + P_{\langle\delta,\tau\rangle}(G / e,x) + P_{\langle\delta,\tau\rangle}\left(G^{\tau(e)} / e,x\right)\right].$
\end{description}
\end{theorem}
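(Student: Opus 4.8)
The plan is to prove parts (1) and (3) directly from the orbit-sum characterizations recorded in Remark~\ref{rem02}, by partitioning the index set of each sum according to the role played by the distinguished edge $e$, and then to deduce (2) from (1) and (4) from (3) using Proposition~\ref{pro01}. Three elementary facts will be used throughout: (i) deleting an edge leaves the vertex count unchanged, so $v(H) = v(H \setminus e)$ for every ribbon graph $H$ containing $e$; (ii) operations in $\mathcal{B}$ with disjoint supports commute, and in particular $H^{\xi(A)} \setminus e = (H \setminus e)^{\xi(A)}$ whenever $e \notin A$ and $\xi \in \mathcal{B}$, together with the locality of partial duality $G^{\delta(A \cup e)} = (G^{\delta(e)})^{\delta(A)}$ for $e \notin A$; and (iii) the single-edge identities $G^{\delta(e)} \setminus e = G/e$, $G^{\tau\delta(e)} \setminus e = G/e$, and $G^{\delta\tau(e)} \setminus e = G^{\tau(e)}/e$, the first being the definition of contraction and the other two holding because the half-twist applied to $e$ is annihilated once $e$ is deleted.

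For (1), start from $P_{\langle\delta\rangle}(G,x) = \sum_{A \subseteq E(G)} x^{v(G^{\delta(A)})}$ and split according to whether $e \notin A$ or $e \in A$. When $e \notin A$, facts (i) and (ii) give $v(G^{\delta(A)}) = v\bigl((G \setminus e)^{\delta(A)}\bigr)$, and these terms sum over $A \subseteq E(G \setminus e)$ to $P_{\langle\delta\rangle}(G \setminus e, x)$. When $e \in A$, write $A = A' \cup e$; then $G^{\delta(A)} = (G^{\delta(e)})^{\delta(A')}$, and facts (i), (ii), (iii) give $v(G^{\delta(A)}) = v\bigl((G/e)^{\delta(A')}\bigr)$, and these terms sum over $A' \subseteq E(G/e)$ to $P_{\langle\delta\rangle}(G/e, x)$. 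Adding the two pieces yields (1).

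For (3), use $P_{\langle\delta\tau\rangle}(G,x) = \sum_{\biguplus_{i=1}^{3} A_i = E(G)} x^{v(G^{1(A_1)\tau\delta(A_2)\delta\tau(A_3)})}$ and split according to which block contains $e$. If $e \in A_1$, the edge $e$ carries the identity operator, so by (i) and (ii) its term equals $x^{v((G \setminus e)^{1(A_1 \setminus e)\tau\delta(A_2)\delta\tau(A_3)})}$; as $(A_1 \setminus e, A_2, A_3)$ ranges over all partitions of $E(G \setminus e)$, these sum to $P_{\langle\delta\tau\rangle}(G \setminus e, x)$. If $e \in A_2$, use commutativity to factor $\tau\delta(e)$ to the front, delete $e$, and apply (iii): the term becomes $x^{v((G/e)^{1(A_1)\tau\delta(A_2 \setminus e)\delta\tau(A_3)})}$, summing to $P_{\langle\delta\tau\rangle}(G/e, x)$. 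If $e \in A_3$, the same manoeuvre with $\delta\tau(e)$ and the identity $G^{\delta\tau(e)} \setminus e = G^{\tau(e)}/e$ gives $x^{v((G^{\tau(e)}/e)^{1(A_1)\tau\delta(A_2)\delta\tau(A_3 \setminus e)})}$, summing to $P_{\langle\delta\tau\rangle}(G^{\tau(e)}/e, x)$. Adding the three pieces yields (3).

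Finally, (2) follows by applying Proposition~\ref{pro01}(1) to write $P_{\langle\tau\delta\tau\rangle}(G,x) = P_{\langle\delta\rangle}(G^{\times},x)$, applying (1) to $G^{\times}$, rewriting $G^{\times} \setminus e = (G \setminus e)^{\times}$ and $G^{\times}/e = (G^{\tau(e)}/e)^{\times}$ (both obtained by commuting the half-twists on edges $\neq e$ past $\delta(e)$ and absorbing the half-twist on $e$ into the deletion), and invoking Proposition~\ref{pro01}(1) once more on each summand. And (4) follows by applying Proposition~\ref{pro01}(2), substituting (3), and redistributing $2^{e(G)} = 2 \cdot 2^{e(G)-1}$ with the help of $e(G \setminus e) = e(G/e) = e(G^{\tau(e)}/e) = e(G) - 1$. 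The only genuine subtlety, everything else being bookkeeping, is item (iii): one must verify that composing $\tau\delta$ (respectively $\delta\tau$) on $e$ with the deletion of $e$ yields ordinary contraction $G/e$ (respectively twisted contraction $G^{\tau(e)}/e$), i.e. that the $\tau$-component acting on $e$ dies under the deletion, and one must keep straight which of the three partition blocks in (3) matches which of the three terms on the right-hand side.
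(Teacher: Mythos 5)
Your proposal is correct and follows essentially the same route as the paper: parts (1) and (3) are proved by splitting the state sum according to whether $e$ lies in $A$ (resp.\ which of the blocks $A_1,A_2,A_3$ contains $e$), with the same identifications $G^{\delta(e)}\setminus e=G/e$, $G^{\tau\delta(e)}\setminus e=G/e$, $G^{\delta\tau(e)}\setminus e=G^{\tau(e)}/e$, and parts (2) and (4) are deduced from (1) and (3) via Proposition~\ref{pro01} exactly as in the paper. The only difference is cosmetic: you keep everything at the level of vertex counts and commutation of operations with deletion, whereas the paper passes through the boundary-component formulation of Theorem~\ref{the01}; both reductions are sound.
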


\begin{proof}
\begin{description}
  \item[(1)] Partitioning edge subsets by $e$:
  \begin{align*}
    P_{\langle\delta\rangle}(G,x)
      &= \sum_{A \subseteq E(G)} x^{v(G^{\delta(A)})} \\
      &= \sum_{\substack{A \subseteq E(G), e \notin A}} x^{v(G^{\delta(A)})} + \sum_{\substack{A \subseteq E(G), e \in A}} x^{v(G^{\delta(A)})} \\
      &= \sum_{A \subseteq E(G) \setminus e} x^{v(G^{\delta(A)})} + \sum_{A' \subseteq E(G) \setminus e} x^{v\left((G^{\delta(e)})^{\delta(A')}\right)}.
  \end{align*}
  By Lemma \ref{lem02}, for $A, A'\subseteq E(G)\setminus e$:
  \[
    v\left((G\setminus e)^{\delta(A)}\right)=f\left((G\setminus e)\setminus (E(G\setminus e)\setminus A)\right)=f\left(G\setminus (E(G)\setminus A)\right)=v\left(G^{\delta(A)}\right),
  \]
  and for contraction:
 \begin{eqnarray*}
v\left((G/e)^{\delta(A')}\right)
&=&v\left((G^{\delta(e)}\setminus e)^{\delta(A')}\right)\\
&=&f\left((G^{\delta(e)}\setminus e)\setminus (E(G^{\delta(e)}\setminus e)\setminus A')\right)\\
&=&f\left(G^{\delta(e)}\setminus (E(G^{\delta(e)})\setminus A')\right)\\
&=&v\left((G^{\delta(e)})^{\delta(A')}\right).
\end{eqnarray*}
Hence,
\begin{eqnarray*}
P_{\langle\delta\rangle}(G,x)
&=& \sum_{A\subseteq E(G\setminus e)}x^{v\left((G\setminus e)^{\delta(A)}\right)}+ \sum_{A'\subseteq E(G/ e)}x^{v\left((G/e)^{\delta(A')}\right)}\\&=&P_{\langle\delta\rangle}(G\setminus e,x)+P_{\langle\delta\rangle}(G/e, x).
\end{eqnarray*}

  \item[(2)] Applying Proposition \ref{pro01} (1) and Theorem \ref{the03} (1):
  \begin{align*}
    P_{\langle\tau\delta\tau\rangle}(G,x)
      &= P_{\langle\delta\rangle}(G^{\times},x) \\
      &= P_{\langle\delta\rangle}(G^{\times} \setminus e,x) + P_{\langle\delta\rangle}(G^{\times} / e,x) \\
      &= P_{\langle\delta\rangle}((G \setminus e)^{\times},x) + P_{\langle\delta\rangle}((G^{\tau(e)} / e)^{\times},x) \\
      &= P_{\langle\tau\delta\tau\rangle}(G \setminus e,x) + P_{\langle\tau\delta\tau\rangle}(G^{\tau(e)} / e,x).
  \end{align*}

\item [(3)] By Theorem \ref{the01}, we decompose the sum based on the position of edge $e$:
\begin{eqnarray*}
P_{\langle\delta\tau\rangle}(G, x)&=&\sum\limits_{\biguplus_{i=1}^{3} A_i = E(G)}x^{f\left(G^{\tau(A_2)}\setminus A_1\right)}
=\sum\limits_{\biguplus_{i=1}^{3} A_i = E(G), e\in A_1}x^{f\left(G^{\tau(A_3)}\setminus A_1\right)}\\&+&\sum\limits_{\biguplus_{i=1}^{3} A_i = E(G), e\in A_2}x^{f\left(G^{\tau(A_3)}\setminus A_1\right)}+\sum\limits_{\biguplus_{i=1}^{3} A_i = E(G), e\in A_3}x^{f\left(G^{\tau(A_3)}\setminus A_1\right)}.
\end{eqnarray*}
\textbf{Case 1: $e \in A_1$.}
\begin{eqnarray*}
&&\sum\limits_{\biguplus_{i=1}^{3} A_i = E(G),~e\in A_1}x^{f\left(G^{\tau(A_3)}\setminus A_1\right)}=\sum\limits_{\biguplus_{i=1}^{3} A_i = E(G\setminus e)}x^{f\left(G^{\tau(A_3)}\setminus A_1\setminus e\right)}\\&=&\sum\limits_{\biguplus_{i=1}^{3} A_i = E(G\setminus e)}x^{f\left((G\setminus e)^{\tau(A_3)}\setminus A_1\right)}
 =P_{\langle\delta\tau\rangle}(G\setminus e,x).
\end{eqnarray*}

\textbf{Case 2: $e \in A_2$.}
\begin{eqnarray*}
&&\sum\limits_{\biguplus_{i=1}^{3} A_i = E(G),~e\in A_2}x^{f\left(G^{\tau(A_3)}\setminus A_1\right)}=\sum\limits_{\biguplus_{i=1}^{3} A_i = E(G/ e)}x^{f\left(G^{\tau(A_3)}\setminus A_1\right)}\\
 &=&\sum\limits_{\biguplus_{i=1}^{3} A_i = E(G/ e)}x^{f\left(G^{\tau(A_3)}\setminus A_1/ e\right)}=\sum\limits_{\biguplus_{i=1}^{3} A_i = E(G/ e)}x^{f\left((G/ e)^{\tau(A_3)}\setminus A_1\right)}\\&=& P_{\langle\delta\tau\rangle}(G/ e,x).
\end{eqnarray*}

\textbf{Case 3: $e \in A_3$.}
\begin{eqnarray*}
&&\sum\limits_{\biguplus_{i=1}^{3} A_i = E(G),~e\in A_3}x^{f\left(G^{\tau(A_3)}\setminus A_1\right)}=\sum\limits_{\biguplus_{i=1}^{3} A_i = E(G^{\tau(e)}/e)}x^{f\left(G^{\tau(A_3\cup e)}\setminus A_1\right)}\\
&=&\sum\limits_{\biguplus_{i=1}^{3} A_i = E(G^{\tau(e)}/e)}x^{f\left((G^{\tau(A_3)}\setminus A_1)^{\tau(e)}\right)}=\sum\limits_{\biguplus_{i=1}^{3} A_i = E(G^{\tau(e)}/e)}x^{f\left((G^{\tau(A_3)}\setminus A_1)^{\tau(e)}/e\right)}\\
&=&\sum\limits_{\biguplus_{i=1}^{3} A_i = E(G^{\tau(e)}/e)}x^{f\left((G^{\tau(e)}/e)^{\tau(A_3)}\setminus A_1\right)}=P_{\langle\delta\tau\rangle}(G^{\tau(e)}/e,x).
\end{eqnarray*}

Summing these three mutually exclusive cases yields: $$P_{\langle\delta\tau\rangle}(G,x)=P_{\langle\delta\tau\rangle}(G\setminus e,x)+P_{\langle\delta\tau\rangle}(G/e,x)+P_{\langle\delta\tau\rangle}\left(G^{\tau(e)}/e,x\right).$$

\item [(4)] Using Proposition \ref{pro01} (2) and Theorem \ref{the03}:
\begin{align*}
P_{\langle\delta,\tau\rangle}(G,x)
  &= 2^{e(G)}P_{\langle\delta\tau\rangle}(G,x) \\
  &= 2^{e(G)} \Big[ P_{\langle\delta\tau\rangle}(G\setminus e,x) + P_{\langle\delta\tau\rangle}(G/e,x) + P_{\langle\delta\tau\rangle}(G^{\tau(e)}/e,x) \Big] \\
  &= 2 \Big[ 2^{e(G\setminus e)}P_{\langle\delta\tau\rangle}(G\setminus e,x) + 2^{e(G/e)}P_{\langle\delta\tau\rangle}(G/e,x) \\
  &\qquad + 2^{e(G^{\tau(e)}/e)}P_{\langle\delta\tau\rangle}(G^{\tau(e)}/e,x) \Big] \\
  &= 2 \Big[ P_{\langle\delta,\tau\rangle}(G\setminus e,x) + P_{\langle\delta,\tau\rangle}(G/e,x) + P_{\langle\delta,\tau\rangle}(G^{\tau(e)}/e,x) \Big].
\end{align*}
\end{description}
\end{proof}

\section{Possible properties of vertex polynomial}

\begin{proposition}\label{pro03}
Let $G$ be a connected ribbon graph. Then
\begin{description}
  \item [\rm (1)] $P_{\langle\delta\rangle}(G,1)=P_{\langle\tau\delta\tau\rangle}(G,1)=2^{e(G)}$, $P_{\langle\delta\tau\rangle}(G,1)=3^{e(G)}$, $P_{\langle\delta,\tau\rangle}(G,1)=6^{e(G)}$.
  \item [\rm (2)] For $\bullet\in \{\langle\delta\rangle, \langle\delta\tau\rangle,\langle\tau\delta\tau\rangle, \langle\delta, \tau\rangle\}$,
  $P_{\bullet}(G, x)=P_{\bullet}(H, x)$ where $H\in \mathrm{Orb}_{\bullet}(G)$.
  \item [\rm (3)] For $\bullet\in \{\langle\delta\rangle, \langle\delta\tau\rangle,\langle\tau\delta\tau\rangle, \langle\delta, \tau\rangle\},$ the vertex polynomial $P_{\bullet}(G, x)$ is interpolating with minimum degree 1.
\end{description}
\end{proposition}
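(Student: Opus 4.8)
The plan is to take the three parts in increasing order of difficulty. Part (1) is pure enumeration: by Remark~\ref{rem02}, $P_{\langle\delta\rangle}(G,x)$ and $P_{\langle\tau\delta\tau\rangle}(G,x)$ are sums indexed by the subsets $A\subseteq E(G)$, so setting $x=1$ gives $2^{e(G)}$; $P_{\langle\delta\tau\rangle}(G,x)$ is a sum indexed by ordered triples $\biguplus_{i=1}^{3}A_i=E(G)$, of which there are $3^{e(G)}$; and $P_{\langle\delta,\tau\rangle}(G,1)=2^{e(G)}P_{\langle\delta\tau\rangle}(G,1)=6^{e(G)}$ by Proposition~\ref{pro01}(2) (or directly, counting ordered $6$-partitions). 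Connectedness is not needed here. For part (2), the observation is that in every case $P_{\bullet}(G,x)=\sum_{g\in\Gamma_{\bullet}}x^{v(G^{g})}$, where $\Gamma_{\bullet}$ is the finite group obtained by assigning, independently to each edge, an element of the relevant subgroup of $\mathcal{B}\cong S_3$ (namely $\mathbb{Z}/2$ for $\langle\delta\rangle$ and $\langle\tau\delta\tau\rangle$, $\mathbb{Z}/3$ for $\langle\delta\tau\rangle$, and $S_3$ for $\langle\delta,\tau\rangle$); since operations on disjoint edge sets commute, $\Gamma_{\bullet}$ genuinely acts on ribbon graphs and $\mathrm{Orb}_{\bullet}(G)$ is the $\Gamma_{\bullet}$-orbit of $G$, matched with the index set of the sum. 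If $H=G^{h}$ with $h\in\Gamma_{\bullet}$, then $(G^{h})^{g}=G^{g\ast h}$ for the group law $\ast$ of $\Gamma_{\bullet}$, and reindexing the sum over $g$ by the bijection $g\mapsto g\ast h$ yields $P_{\bullet}(H,x)=P_{\bullet}(G,x)$.

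Part (3) is the substantive one. Using Theorem~\ref{the01} I will write each polynomial as $\sum_{s\in S_{\bullet}}x^{\phi_{\bullet}(s)}$ for an explicit finite index set $S_{\bullet}$, equip $S_{\bullet}$ with a connected adjacency graph, and verify $|\phi_{\bullet}(s)-\phi_{\bullet}(s')|\le 1$ whenever $s\sim s'$; a discrete intermediate value argument along a path from a minimizer to a maximizer then shows that the support $\{\phi_{\bullet}(s):s\in S_{\bullet}\}$ is an interval $[m,M]$ of integers, so $P_{\bullet}$ is interpolating, and it remains only to identify $m$. For $\bullet=\langle\delta\rangle$ take $S_{\langle\delta\rangle}=2^{E(G)}$ with $\phi(A)=f(G\setminus A^{c})=f\bigl((V(G),A)\bigr)$ and the hypercube adjacency $A\sim A'\Longleftrightarrow|A\triangle A'|=1$ (connected); the unit-step bound is the standard fact that adding or deleting a single edge of a ribbon graph (attaching or detaching a $1$-handle) changes the number of boundary components by at most $1$. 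For $\bullet=\langle\delta\tau\rangle$, Theorem~\ref{the01}(3) shows the summand $f(G^{\tau(A_3)}\setminus A_1)$ depends only on the disjoint pair $(A_1,A_3)$, so $S_{\langle\delta\tau\rangle}$ identifies with the functions $E(G)\to\{1,2,3\}$ under the Hamming adjacency (change one edge's label), which is connected; a single move either toggles whether an edge is deleted, or toggles a half-twist on a present edge, or toggles both, and in every case the two ribbon graphs before and after are related by deleting a single (possibly twisted) edge, so $|\phi-\phi'|\le 1$ — here one also uses that inserting a half-twist on one edge changes $f$ by at most $1$.

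In both of those cases $m=1$: every ribbon graph has $f\ge 1$, and fixing a spanning tree $T$ of the connected graph $G$, the ribbon subgraph $(V(G),T)$ is a tree, hence has Euler characteristic $1$ and therefore exactly one boundary component; this realizes the exponent $1$ (take $A=T$ for $\langle\delta\rangle$, and $A_1=E(G)\setminus T,\ A_3=\emptyset$ for $\langle\delta\tau\rangle$). The remaining two cases reduce to these: $P_{\langle\tau\delta\tau\rangle}(G,x)=P_{\langle\delta\rangle}(G^{\times},x)$ with $G^{\times}$ connected (Proposition~\ref{pro01}(1)), and $P_{\langle\delta,\tau\rangle}(G,x)=2^{e(G)}P_{\langle\delta\tau\rangle}(G,x)$ (Proposition~\ref{pro01}(2)), and multiplying by a positive constant changes neither the support nor the minimum degree.

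The main obstacle is the unit-step verification in part (3): writing the polynomial over a connected index graph is routine once Theorem~\ref{the01} is available, but the two geometric inputs — deleting an edge, and inserting a single half-twist, each perturb the boundary-component count by at most $1$ — must be handled carefully, and in the $\langle\delta\tau\rangle$ case the "diagonal" move exchanging an edge between $A_1$ and $A_3$ should be realized as one edge deletion rather than two successive perturbations (otherwise one only gets a bound of $2$). The claim $m=1$ is comparatively easy, resting solely on the fact that a tree ribbon subgraph has a single boundary component.
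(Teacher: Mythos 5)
Your parts (1) and (2) are correct and essentially match the paper: the evaluations at $x=1$ just count the index sets ($2^{e(G)}$ subsets, $3^{e(G)}$ ordered tripartitions, $6^{e(G)}$ ordered $6$-partitions), and orbit-invariance is exactly what the paper uses, though your reindexing $g\mapsto g\ast h$ over the edge-wise subgroup $\Gamma_{\bullet}$ is a slightly more careful version that also handles multiplicities in the state sum. For part (3) you take a genuinely different route. The paper starts from an orbit element $H$ realizing the maximum degree, invokes part (2) to replace $G$ by $H$, and then walks down a spanning tree of $H$: applying $\delta(e)$ (resp.\ $\delta\tau(e)$) to a non-loop edge drops the vertex count by exactly one, so the exponents $v(H), v(H)-1,\dots,1$ all occur, which gives interpolation and minimum degree $1$ at once; the cases $\langle\tau\delta\tau\rangle$ and $\langle\delta,\tau\rangle$ are reduced via Proposition \ref{pro01} exactly as you do. You instead prove a unit-step (Lipschitz) property of the exponent $f$ over a connected index graph (hypercube, resp.\ Hamming graph on $\{1,2,3\}^{E(G)}$) and apply a discrete intermediate-value argument, identifying the minimum $1$ separately via a spanning tree. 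This is valid and in some ways stronger: it yields the interval property without needing part (2) or locating the maximizing orbit element. Its cost is the extra geometric input you flag, namely that a single half-twist changes $f$ by at most $1$; this is true (the twist merely re-pairs the four endpoints of the two boundary arcs running along the edge, and re-pairing four points changes the number of resulting circles by at most one), but it is asserted rather than proved in your plan, so you should either include that short argument or avoid it altogether by using the sparser adjacency in which a coordinate may only change between label $1$ and label $2$, or between $1$ and $3$: this graph is still connected, and every such move is the deletion or addition of a single (possibly twisted) edge, so only the band-attachment fact is needed. With that point settled, your proof is complete, and the paper's argument is the shorter of the two while yours is the more self-contained for part (3).
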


\begin{proof}
\begin{description}
  \item [(1)] The evaluation
$P_{\langle\delta\rangle}(G,1)$ equals the total number of partial duals in $G$, yielding $2^{e(G)}.$ The evaluation
$P_{\langle\tau\delta\tau\rangle}(G,1)$ corresponds to the total number of partial Wilsonials in $G$, also given by $2^{e(G)}.$ For $P_{\langle\delta\tau\rangle}(G,1)$, its value represents the total number of partial trialities in $G$, equivalent to $3^{e(G)}.$ Finally,
$P_{\langle\delta, \tau\rangle}(G,1)$ quantifies the total number of twist duals in $G$, expressed as $6^{e(G)}.$

\item [(2)] Since $H \in \mathrm{Orb}_{\bullet}(G)$ implies $\mathrm{Orb}_{\bullet}(H) = \mathrm{Orb}_{\bullet}(G)$, the vertex polynomial $P_{\bullet}(G, x)$ depends only on the orbit structure, hence is identical for all $H$ in the orbit.
\item [(3)] By Proposition \ref{pro01}, it suffices to consider the cases where $\bullet\in \{\langle\delta\rangle, \langle\delta\tau\rangle\}.$ Let $H\in \mathrm{Orb}_{\bullet}(G)$
where the maximum degree of
$P_{\bullet}(G, x)$ equals $v(H)$.
The operators $\delta(e)$ and $\delta\tau(e)$ convert a non-loop edge into a loop, reducing the number of vertices by 1. Let $A$ be the edge set of a spanning tree of $H$. By induction on the number of vertices in $H$, we conclude that $H^{\bullet(A)}$ collapses to a single vertex. Consequently, $P_{\bullet}(H, x)$ is an interpolating polynomial with minimum degree 1.
\end{description}
\end{proof}

\begin{remark}\label{rem01}
By Proposition \ref{pro03} (3), for any connected ribbon graph $G$, there exists a bouquet $B$ such that $B\in \mathrm{Orb}_{\bullet}(G)$ when $\bullet\in \{\langle\delta\rangle, \langle\delta\tau\rangle,\langle\tau\delta\tau\rangle, \langle\delta, \tau\rangle\}$. Moreover, $P_{\bullet}(G, x)=P_{\bullet}(B, x)$ by Proposition \ref{pro03} (2). Thus the vertex polynomial of any connected ribbon graph is equal to that of a bouquet. Hence we shall restrict ourselves to bouquets.
\end{remark}

Moffatt \cite{Moffatt15} defined the \emph{one-vertex-joint} operation on two disjoint ribbon graphs $P$ and $Q$, denoted by $P\vee Q$, in two steps:
\begin{description}
  \item[(i)] Choose an arc $p$ on the boundary of a vertex-disc $v_1$ of $P$ that lies between two consecutive ribbon ends, and choose another such arc $q$ on the boundary of a vertex-disc $v_2$ of $Q$.
  \item[(ii)] Paste the vertex-discs $v_1$ and $v_2$ together by identifying the arcs $p$ and $q$.
\end{description}
Note that \[v(P\vee Q)=v(P)+v(Q)-1.\]

\begin{proposition}\label{pro1}
Let $G$ and $H$ be ribbon graphs. For $\bullet\in \{\langle\delta\rangle, \langle\delta\tau\rangle, \langle\tau\delta\tau\rangle, \langle\delta, \tau\rangle\}$,
$$P_{\bullet}(G\cup H,x)=xP_{\bullet}(G\vee H,x)=P_{\bullet}(G,x)P_{\bullet}(H,x).$$
\end{proposition}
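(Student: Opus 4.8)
The plan is to reduce the four cases to the two cases $\bullet\in\{\langle\delta\rangle,\langle\delta\tau\rangle\}$ using Proposition~\ref{pro01}, and then to verify both equalities termwise in the edge-subset and ordered-partition expansions of the vertex polynomial supplied by Theorem~\ref{the01}. For the reduction, note that a half-twist changes neither which vertex a ribbon end meets nor the cyclic order of ribbon ends around a vertex, so Petrialization commutes with both $\cup$ and $\vee$: $(G\cup H)^{\times}=G^{\times}\cup H^{\times}$ and $(G\vee H)^{\times}=G^{\times}\vee H^{\times}$, the latter joint taken along the images of the chosen arcs. Hence Proposition~\ref{pro01}(1) turns the $\langle\tau\delta\tau\rangle$ statement for $(G,H)$ into the $\langle\delta\rangle$ statement for $(G^{\times},H^{\times})$. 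Similarly $e(G\cup H)=e(G\vee H)=e(G)+e(H)$, so Proposition~\ref{pro01}(2) turns the $\langle\delta,\tau\rangle$ statement into the $\langle\delta\tau\rangle$ statement. Thus it suffices to treat $\langle\delta\rangle$ and $\langle\delta\tau\rangle$.

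The key geometric input is the boundary-component analogue of the identity $v(P\vee Q)=v(P)+v(Q)-1$, namely
$$f(P\vee Q)=f(P)+f(Q)-1,$$
which I would prove directly from the definition of the one-vertex-joint: pasting the vertex-discs $v_1\subseteq P$ and $v_2\subseteq Q$ along boundary arcs $p$ and $q$ (each lying between consecutive ribbon ends, hence on a free portion of the vertex boundary) yields a single disc whose boundary circle is formed by cutting the boundary circles of $v_1$ and $v_2$ open along $p$ and $q$ and concatenating them; consequently the unique boundary component of $P$ through $p$ and the unique one of $Q$ through $q$ merge into one, and no other boundary component is affected. Moreover this identity is stable under the operations occurring in Theorem~\ref{the01}: deleting a set of edges only removes ribbon ends and half-twisting a set of edges only alters the interiors of edge-ribbons, so in every case the images of $p$ and $q$ still lie on free boundary and the cyclic orders around vertices are unchanged. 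Therefore applying any such twisted-dual operation to $G\vee H$ returns exactly the one-vertex-joint of the corresponding operations applied to $G$ and to $H$, just as (since twisted duality, deletion, and half-twist all act disjointly on components) applying it to $G\cup H$ returns the corresponding disjoint union.

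With this in hand, fix $\bullet\in\{\langle\delta\rangle,\langle\delta\tau\rangle\}$ and use the expansion of Theorem~\ref{the01}. Because $E(G\cup H)=E(G\vee H)=E(G)\sqcup E(H)$, an edge subset (respectively, ordered $3$-partition) of the left-hand edge set is precisely a pair of such objects, one for $E(G)$ and one for $E(H)$; this gives a bijection between the common index set of $P_\bullet(G\cup H,x)$ and $P_\bullet(G\vee H,x)$ and the product index set of $P_\bullet(G,x)\,P_\bullet(H,x)$. For a fixed index, the ribbon graph whose boundary-component count forms the exponent in Theorem~\ref{the01} is a disjoint union $K_G\sqcup K_H$ in the $G\cup H$ expansion and the one-vertex-joint $K_G\vee K_H$ in the $G\vee H$ expansion, where $K_G$ (respectively $K_H$) is exactly the graph contributing to $P_\bullet(G,x)$ (respectively $P_\bullet(H,x)$) at the matching index. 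Hence the exponent for $G\cup H$ equals $f(K_G)+f(K_H)$, the exponent for $G\vee H$ equals $f(K_G)+f(K_H)-1$ by the boundary-count identity, and both split as the sum of the matching exponents for $G$ and for $H$. Summing over the common index set gives $P_\bullet(G\cup H,x)=x\,P_\bullet(G\vee H,x)=P_\bullet(G,x)P_\bullet(H,x)$, and the first-paragraph reductions extend this to all four $\bullet$. The main obstacle is the bookkeeping in the second paragraph — checking carefully that forming a spanning ribbon subgraph and half-twisting a set of edges both commute with the one-vertex-joint, so that the joint structure (and thus the ``$-1$'' in the boundary count) persists through every operation appearing in Theorem~\ref{the01}; once that is nailed down, the rest is index matching.
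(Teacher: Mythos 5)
Your proof is correct, but it follows a different route from the paper's. The paper proves the identity directly in the twisted-dual orbit expansion (doing the $\langle\delta,\tau\rangle$ case in detail and declaring the others analogous): for each partition of $E(G)\cup E(H)$ it restricts the six operators to $E(G)$ and $E(H)$ and uses $v(\overline{G\cup H})=v(\overline{G})+v(\overline{H})$ and $v(\overline{G\vee H})=v(\overline{G})+v(\overline{H})-1$, i.e.\ it implicitly relies on the fact that a twisted dual (in particular a partial dual) of a one-vertex-joint is again a one-vertex-joint of the corresponding twisted duals — a true but not entirely obvious fact, since partial duality rebuilds the vertex discs from boundary components. You instead first reduce the four cases to $\langle\delta\rangle$ and $\langle\delta\tau\rangle$ via Proposition~\ref{pro01} (using that Petrialization respects $\cup$ and $\vee$ and that $e(G\cup H)=e(G\vee H)=e(G)+e(H)$), and then work in the boundary-component expansions of Theorem~\ref{the01}, where the only operations to be commuted past the joint are edge deletion and half-twisting — both of which leave vertex discs untouched, so the compatibility is geometrically immediate — together with the identity $f(P\vee Q)=f(P)+f(Q)-1$ and the obvious product structure of the index sets. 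The paper's argument is shorter and treats all operators uniformly in one state sum; yours trades that for a more elementary justification of the crucial join-compatibility step, since partial duality never has to be moved across the joint. Both the index bookkeeping and the reductions in your write-up check out.
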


\begin{proof}
The proof relies on the additive behavior of vertex counts under disjoint union and join operations. We provide details for $\bullet = \langle\delta, \tau\rangle$; other cases follow similarly. Let $A_1, \dots, A_6$ be a partition of the edge set $E(G) \cup E(H)$. For $M \in \{G, H, G \cup H, G \vee H\}$, define the operator-modified ribbon graphs $\overline{M}$ as:
\begin{align*}
\overline{M}&:= M^{1(A_1\cap E(M))\delta(A_2\cap E(M))\tau(A_3\cap E(M))\tau\delta(A_4\cap E(M))\delta\tau(A_5\cap E(M))\tau\delta\tau(A_6\cap E(M))}.
\end{align*}
For the disjoint union operation:
$$v\left(\overline{G\cup H}\right)= v(\overline{G} \cup \overline{H})
= v(\overline{G}) + v(\overline{H}).$$
For the joint operation:
$$v\left(\overline{G\vee H}\right) = v(\overline{G} \vee \overline{H})
= v(\overline{G}) + v(\overline{H}) - 1.$$
Substituting into the state sum:
\[
P_{\langle\delta, \tau\rangle}(G\cup H,x) = \sum\limits_{\biguplus_{i=1}^{6} A_i = E(G\cup H)} x^{v(\overline{G\cup H})} = x\sum\limits_{\biguplus_{i=1}^{6} A_i = E(G\cup H)}  x^{v(\overline{G\vee H})} = xP_{\langle\delta, \tau\rangle}(G\vee H,x),
\]
and
\[
P_{\langle\delta, \tau\rangle}(G\cup H,x) = \left(\sum\limits_{\biguplus_{i=1}^{6} A_i = E(G)} x^{v(\overline{G})}\right)\left(\sum\limits_{\biguplus_{i=1}^{6} A_i = E(H)} x^{v(\overline{H})}\right) = P_{\langle\delta, \tau\rangle}(G,x)P_{\langle\delta, \tau\rangle}(H,x).
\]
\end{proof}

\begin{proposition}
Let $G$ be a connected ribbon graph with $E(G) \neq \emptyset$. Then

\begin{description}
\item[\rm (1)] $P_{\langle\delta\rangle}(G,x)$ is a monomial if and only if $G$ is a bouquet in which every loop is a trivial non-orientable loop.

\item[\rm (2)] $P_{\langle\tau\delta\tau\rangle}(G,x)$ is a monomial if and only if $G$ is a bouquet in which every loop is a trivial orientable loop.

\item[\rm (3)] $P_{\bullet}(G,x)$ is never a monomial for $\bullet \in \{\langle\delta\tau\rangle, \langle\delta, \tau\rangle\}$.
\end{description}
\end{proposition}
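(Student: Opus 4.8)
The plan is to reduce each statement to the case of a bouquet and then read the answer off the state sums of Theorems~\ref{the01} and~\ref{the03}. For the reduction, note first that if $P_{\bullet}(G,x)$ is a monomial then $G$ must be a bouquet: by Proposition~\ref{pro03}(3) the polynomial $P_{\bullet}(G,x)$ is interpolating with minimum degree $1$, so if it is a monomial it equals $c\,x$ for some $c$; and since $G$ lies in its own orbit $\mathrm{Orb}_{\bullet}(G)$ (take $A=\emptyset$ when $\bullet\in\{\langle\delta\rangle,\langle\tau\delta\tau\rangle\}$, and $A_{1}=E(G)$ with the remaining blocks empty when $\bullet\in\{\langle\delta\tau\rangle,\langle\delta,\tau\rangle\}$), the exponent $v(G)$ occurs in $P_{\bullet}(G,x)$, forcing $v(G)=1$. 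The ``if'' directions of (1) and (2) will instead be obtained from a direct computation on the claimed bouquets.

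For Part (1), Theorem~\ref{the01}(1) gives $P_{\langle\delta\rangle}(B,x)=\sum_{A\subseteq E(B)}x^{f((V(B),A))}$ for a bouquet $B$, and since the summand $A=\emptyset$ contributes $x^{1}$, the polynomial is a monomial if and only if $f((V(B),A))=1$ for every $A\subseteq E(B)$. I would then establish the equivalence: this holds if and only if every loop of $B$ is a trivial non-orientable loop. For the backward direction, deleting edges creates no new interlacements and changes no twists, so each spanning sub-bouquet $(V(B),A)$ again has all loops trivial non-orientable, and such a bouquet is a disc with $|A|$ crosscaps, which has a single boundary component. For the forward direction I argue by contraposition, using that a bouquet with one orientable loop has $f=2$ and a bouquet with two interlaced non-orientable loops has $f=2$ (a short boundary trace, or equivalently the identity $f(B)-1=\operatorname{null}_{\mathbb{F}_{2}}(A_{B})$ for the signed interlacement matrix applied to the $2\times2$ all-ones matrix): if $B$ has an orientable loop $e$ take $A=\{e\}$, and if $B$ has a non-orientable loop $e$ that is non-trivial, hence interlaces some $g$, take $A=\{e,g\}$ (or $A=\{g\}$ if $g$ is orientable); in each case a term of degree at least $2$ appears, so $P_{\langle\delta\rangle}(B,x)$ is not a monomial. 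Part (2) follows from Part (1) together with Proposition~\ref{pro01}(1): $P_{\langle\tau\delta\tau\rangle}(G,x)=P_{\langle\delta\rangle}(G^{\times},x)$ is a monomial iff $G^{\times}$ is a bouquet all of whose loops are trivial non-orientable, and applying $\tau$ to every edge preserves vertex counts and interlacements while flipping the orientability of each loop, so this is equivalent to $G$ being a bouquet all of whose loops are trivial orientable.

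For Part (3), Proposition~\ref{pro01}(2) gives $P_{\langle\delta,\tau\rangle}(G,x)=2^{e(G)}P_{\langle\delta\tau\rangle}(G,x)$, so it suffices to show $P_{\langle\delta\tau\rangle}(G,x)$ is never a monomial, and by the reduction we may take $G=B$ to be a bouquet containing a loop $e$. In the state sum $P_{\langle\delta\tau\rangle}(B,x)=\sum_{\biguplus_{i=1}^{3}A_{i}=E(B)}x^{f(B^{\tau(A_{3})}\setminus A_{1})}$ of Theorem~\ref{the01}(3), the partition $A_{1}=E(B)$, $A_{2}=A_{3}=\emptyset$ contributes $x^{1}$, whereas the partition that deletes every edge except $e$ and adds a half-twist to $e$ precisely when $e$ is non-orientable produces a bouquet with a single orientable loop and hence contributes $x^{2}$; so $P_{\langle\delta\tau\rangle}(B,x)$ has terms of two distinct degrees. (Alternatively, one may induct on $e(G)$ via the deletion--contraction--twisted-contraction recursion of Theorem~\ref{the03}(3) and the multiplicativity of Proposition~\ref{pro1}, the base case being that a single loop and a single bridge each have vertex-$\langle\delta\tau\rangle$ polynomial $x^{2}+2x$.) The only step beyond bookkeeping is the geometric fact used in Part (1) that two interlaced non-orientable loops bound exactly two boundary components --- equivalently, that a bouquet all of whose spanning sub-bouquets have a single boundary component must have identity signed interlacement matrix; granting that, the remainder is routine manipulation of the state sums of Section~3.
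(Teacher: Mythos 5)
Your proposal is correct and follows essentially the same route as the paper: reduce to bouquets via the interpolating/minimum-degree-one property of Proposition~\ref{pro03}(3), rule out orientable loops and interlaced non-orientable loops by exhibiting states (equivalently, orbit elements) with two boundary components, handle $\langle\tau\delta\tau\rangle$ through $P_{\langle\tau\delta\tau\rangle}(G,x)=P_{\langle\delta\rangle}(G^{\times},x)$, and settle $\langle\delta,\tau\rangle$ via the $2^{e(G)}$ factor of Proposition~\ref{pro01}(2). The only cosmetic difference is in the sufficiency of (1), where you compute directly that a bouquet of trivial non-orientable loops (and each of its spanning sub-bouquets) has one boundary component, while the paper obtains the same monomial $2^{n}x$ by writing the bouquet as an iterated one-vertex-join and invoking multiplicativity.
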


\begin{proof}
\begin{description}
\item [(1)] For sufficiency, let $E(G) = \{e_1, e_2, \ldots, e_n\}$ and assume $G$ is a bouquet where every edge $e_i$ is a trivial non-orientable loop, i.e., each corresponds to a twisted ribbon $(e_i, -e_i)$. Then $G$ decomposes as the iterated join:
\[
G = (e_1, -e_1) \vee (e_2, -e_2) \vee \cdots \vee (e_n, -e_n).
\]
Applying Proposition \ref{pro01}, we compute the polynomial directly:
\[
P_{\langle\delta\rangle}(G,x) = x^{-(n-1)} \left(P_{\langle\delta\rangle}((e_1, -e_1),x)\right)^n = x^{-(n-1)}(2x)^n = 2^n x,
\]
which explicitly demonstrates the monomial form.

To prove necessity, assume $P_{\langle\delta\rangle}(G,x) = 2^{e(G)}x$ by Proposition \ref{pro03}(3).  This requires $G$ and all its partial duals to be bouquets. To establish the loop conditions, first observe that if any edge $e \in E(G)$ is orientable, the partial dual $G^{\delta(e)}$ would satisfy $v(G^{\delta(e)}) = 2$, contradicting the bouquet requirement. Furthermore, assuming two loops $e_1$ and $e_2$ are interlaced leads to $v(G^{\delta(\{e_1,e_2\})}) = 2$, another contradiction. Therefore, $G$ must be a bouquet of trivial non-orientable loops.
\item [(2)] The equivalence follows through Petrie duality arguments. By Proposition \ref{pro01} (1), the polynomial $P_{\langle\tau\delta\tau\rangle}(G,x)$ being monomial is equivalent to $P_{\langle\delta\rangle}(G^\times,x)$ being monomial. From (1), this duality translates the non-orientable loop condition on $G^\times$ to an orientable loop condition on $G$ itself.
\item [(3)] To show non-monomiality for $\bullet \in \{\langle\delta\tau\rangle, \langle\delta, \tau\rangle\}$, we focus on $\langle\delta\tau\rangle$ by Proposition \ref{pro01} (2). Suppose for contradiction that $P_{\langle\delta\tau\rangle}(G,x)$ is monomial. Then Proposition \ref{pro03} (3) forces $P_{\langle\delta\tau\rangle}(G,x) = 3^{e(G)}x$ and $G$ must be a bouquet. However, this leads to an inconsistency: for any edge $e \in E(G)$,
\[
\begin{cases}
v(G^{\tau\delta(e)}) = 2, & \text{if } e \text{ is orientable loop,} \\
v(G^{\delta\tau(e)}) = 2, & \text{if } e \text{ is non-orientable loop.}
\end{cases}
\]
Since both $G^{\tau\delta(e)}$ and $G^{\delta\tau(e)}$ lie in $\mathrm{Orb}_{\langle\delta\tau\rangle}(G)$, this violates the monomial condition. Therefore, no such ribbon graph $G$ can exist.
\end{description}
\end{proof}

\section{Vertex polynomials and signed intersection graphs}

The {\it intersection graph} \cite{CSL} $I(B)$ of  a bouquet $B$ is the graph with vertex set $E(B)$, and in which two vertices $e_1$ and $e_2$ of $I(B)$ are adjacent if and only if $e_1$ and $e_2$ are interlaced in $B$. The {\it signed intersection graph} $SI(B)$ of a bouquet $B$ consists of $I(B)$ and a $+$ or $-$ sign at each vertex of $I(B)$, where the vertex corresponding to the orientable loop of $B$ is signed $+$ and the vertex corresponding to the non-orientable loop of $B$ is signed $-$.

\begin{lemma}[\cite{QYXJ2}]\label{lem01}
If two bouquets $B_{1}$ and $B_{2}$ have isomorphic signed intersection
graph, then $f(B_{1})=f(B_{2})$.
\end{lemma}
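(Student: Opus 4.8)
The strategy is to translate $f(B)$ into a quantity that is visibly an isomorphism invariant of $SI(B)$. Given a bouquet $B$ with loops $e_1,\dots,e_n$, form the symmetric $n\times n$ matrix $A(B)$ over $\mathbb{GF}(2)$ indexed by $E(B)$ by setting $A(B)_{ij}=1$ for $i\neq j$ exactly when $e_i$ and $e_j$ are interlaced in $B$, and $A(B)_{ii}=1$ exactly when $e_i$ is a non-orientable loop. Thus $A(B)$ is nothing but the adjacency matrix of $SI(B)$ with the vertex signs recorded on the diagonal, and an isomorphism $SI(B_1)\cong SI(B_2)$ is precisely a permutation matrix $\Pi$ with $A(B_2)=\Pi A(B_1)\Pi^{T}$. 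In particular $n=e(B)$ and $\operatorname{rank}_{\mathbb{GF}(2)}A(B)$ are determined by the isomorphism type of $SI(B)$.

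The key identity is
\[
 f(B) \;=\; e(B) + 1 - \operatorname{rank}_{\mathbb{GF}(2)}A(B),
\]
equivalently (since $\chi(\widehat{B}) = 1 - e(B) + f(B) = 2 - \gamma(B)$ for the closed surface $\widehat{B}$ obtained from $B$ by capping its boundary circles) the assertion that the Euler genus of the bouquet $B$ equals $\operatorname{rank}_{\mathbb{GF}(2)}A(B)$. This is the Cohn--Lempel formula specialised to bouquets, and also follows from the theory of ribbon-graph delta-matroids; I would either cite it or prove it by induction on $e(B)$. The base case $e(B)=0$ is trivial. If $B$ has a trivial loop $e$, then $B=B'\vee L$ for a one-loop bouquet $L$, so $A(B)=A(B')\oplus A(L)$ is block diagonal; using $f(P\vee Q)=f(P)+f(Q)-1$ together with the two easy evaluations $f(L)=2$ (orientable) and $f(L)=1$ (non-orientable) one checks the identity passes from $B'$ to $B$ in both cases. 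If every loop of $B$ is non-trivial, choose a non-orientable loop $e$ if one exists, and otherwise an interlaced pair of orientable loops; applying the appropriate pivot/local-complementation move to $B$ (realised by partial duality with respect to $e$, resp.\ the pair, followed by deletion) yields a bouquet on fewer loops whose interlacement matrix is the Schur complement of $A(B)$ at that pivot --- which drops $\operatorname{rank}_{\mathbb{GF}(2)}$ and $e(B)$ by the same amount --- while leaving $f$ unchanged, so the induction closes.

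I expect the non-trivial-loop step to be the main obstacle: one must verify carefully that the chosen combinatorial move on $B$ simultaneously realises the algebraic Schur-complement step on $A(B)$ and preserves the boundary-component count (the trivial-loop case and the base case are routine). Once the displayed identity is in hand, the lemma is immediate: if $SI(B_1)\cong SI(B_2)$, then $e(B_1)=|V(SI(B_1))|=|V(SI(B_2))|=e(B_2)$ and $\operatorname{rank}_{\mathbb{GF}(2)}A(B_1)=\operatorname{rank}_{\mathbb{GF}(2)}A(B_2)$, hence $f(B_1)=f(B_2)$.
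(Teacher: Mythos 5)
Note first that the paper offers no proof of this lemma at all: it is imported verbatim from \cite{QYXJ2}, so there is no internal argument to compare yours against. Your route is the standard one (and the one underlying the cited source): the identity $f(B)=e(B)+1-\operatorname{rank}_{\mathrm{GF}(2)}A(B)$, i.e.\ the Euler genus of a bouquet equals the $\mathrm{GF}(2)$-rank of its signed interlacement matrix, is a known, citable result (Cohn--Lempel in the orientable case, its trip-matrix extension in general), and the lemma is then immediate because an isomorphism $SI(B_1)\cong SI(B_2)$ is exactly a simultaneous row-and-column permutation $A(B_2)=\Pi A(B_1)\Pi^{T}$, which preserves $e(B)$ and the rank. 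Two remarks on your induction sketch, in case you prove the identity rather than cite it. The step you single out as the main obstacle --- that the reduction move preserves $f$ --- is in fact automatic: since $f(H)=v(H^{*})$ and $(G^{\delta(A)})^{*}=G^{\delta(A^{c})}$, one gets $f(G^{\delta(A)})=f(G\setminus A)$ for every $A\subseteq E(G)$, and applying this with $H=G^{\delta(A)}$ gives $f\bigl(G^{\delta(A)}\setminus A\bigr)=f\bigl((G^{\delta(A)})^{\delta(A)}\bigr)=f(G)$; in particular your one-edge move is nothing but contraction, $G^{\delta(e)}\setminus e=G/e$. The genuine content of that step is the other half: verifying that partial duality at a non-orientable loop (resp.\ at an interlaced pair of orientable loops) acts on the signed interlacement matrix as the principal pivot transform, so that deleting the pivoted edges produces the Schur complement; this is standard (it is the local complementation/pivot correspondence of Bouchet and Brijder--Hoogeboom) but must be proved or cited explicitly. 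With either the citation of the rank formula or that verification supplied, your argument is correct.
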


\begin{theorem}\label{the04}
If two bouquets $B_{1}$ and $B_{2}$ have isomorphic signed intersection
graph, then $P_{\bullet}(B_{1},x)=P_{\bullet}(B_{2},x)$ for $\bullet\in \{\langle\delta\rangle, \langle\delta\tau\rangle, \langle\tau\delta\tau\rangle, \langle\delta, \tau\rangle\}$.
\end{theorem}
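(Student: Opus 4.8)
The plan is to reduce all four cases to a single statement about boundary-component counts of sub-bouquets, and then to match the state sums of Theorem~\ref{the01} and Proposition~\ref{pro01} term by term along the given isomorphism. Fix a signed-graph isomorphism $\phi\colon E(B_1)\to E(B_2)$ realizing $SI(B_1)\cong SI(B_2)$. Two elementary structural facts about a bouquet $B$ underlie everything. (i) For $S\subseteq E(B)$, the spanning sub-bouquet $B\setminus S^c$ is a bouquet whose signed intersection graph is the subgraph of $SI(B)$ induced on $S$, because deleting ribbons does not alter the cyclic order of the surviving ribbon ends around the vertex disc and the orientable/non-orientable type of a loop is intrinsic to its ribbon. (ii) For $T\subseteq E(B)$, the partial Petrial $B^{\tau(T)}$ is a bouquet with the same intersection graph as $B$ but with the sign of every vertex in $T$ reversed, because a half-twist moves no ribbon ends (so all interlacements are preserved) while it toggles whether a loop ribbon together with the vertex forms a M\"obius band. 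Since $\tau$ commutes with deletion on disjoint edges, (i) and (ii) combine: for $T\subseteq S\subseteq E(B)$ the bouquet $(B\setminus S^c)^{\tau(T)}=B^{\tau(T)}\setminus S^c$ has signed intersection graph equal to $SI(B)$ induced on $S$ with the signs on $T$ reversed.

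From this I would extract the following Key Lemma: for $S\subseteq E(B_1)$ and $T\subseteq S$,
$$f\!\left(B_1^{\tau(T)}\setminus S^c\right)=f\!\left(B_2^{\tau(\phi(T))}\setminus \phi(S)^c\right).$$
Indeed, by the preceding paragraph the two bouquets have signed intersection graphs obtained from $SI(B_1)$ induced on $S$ and from $SI(B_2)$ induced on $\phi(S)$, respectively, by reversing the signs on $T$ and on $\phi(T)$; since $\phi$ restricts to an isomorphism between these induced signed subgraphs carrying $T$ to $\phi(T)$, the two signed graphs are isomorphic, and Lemma~\ref{lem01} gives the asserted equality of boundary-component counts.

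The four cases then follow by transporting the summation index along $\phi$. For $\bullet=\langle\delta\rangle$: Theorem~\ref{the01}(1) gives $P_{\langle\delta\rangle}(B_i,x)=\sum_{A\subseteq E(B_i)}x^{f(B_i\setminus A^c)}$; reindexing the $B_1$-sum by $A\mapsto\phi(A)$ and applying the Key Lemma with $T=\emptyset$ identifies it with the $B_2$-sum. For $\bullet=\langle\delta\tau\rangle$: Theorem~\ref{the01}(3) gives $P_{\langle\delta\tau\rangle}(B_i,x)=\sum x^{f(B_i^{\tau(A_3)}\setminus A_1)}$ over ordered triples $\biguplus_{j=1}^{3}A_j=E(B_i)$; writing $S=A_2\cup A_3$ (so $S^c=A_1$) and $T=A_3$, this is $\sum x^{f(B_i^{\tau(T)}\setminus S^c)}$, and reindexing the triples by $\phi$ and applying the Key Lemma gives a term-by-term match. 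For $\bullet=\langle\tau\delta\tau\rangle$: Proposition~\ref{pro01}(1) gives $P_{\langle\tau\delta\tau\rangle}(B_i,x)=P_{\langle\delta\rangle}(B_i^{\times},x)$, and since $B_i^{\times}=B_i^{\tau(E(B_i))}$ has signed intersection graph obtained from $SI(B_i)$ by reversing every sign, $\phi$ is still an isomorphism $SI(B_1^{\times})\to SI(B_2^{\times})$, so the $\langle\delta\rangle$ case applies to the bouquets $B_1^{\times},B_2^{\times}$. For $\bullet=\langle\delta,\tau\rangle$: Proposition~\ref{pro01}(2) gives $P_{\langle\delta,\tau\rangle}(B_i,x)=2^{e(B_i)}P_{\langle\delta\tau\rangle}(B_i,x)$, and $e(B_1)=|V(SI(B_1))|=|V(SI(B_2))|=e(B_2)$, so the claim reduces to the $\langle\delta\tau\rangle$ case.

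I expect the only real work to lie in verifying the structural facts (i) and (ii) about bouquets, and especially (ii): that inserting a half-twist in a ribbon preserves every interlacement while flipping the M\"obius character of exactly the twisted loops. Once these are in hand, the Key Lemma is immediate from Lemma~\ref{lem01}, and the remainder of the argument is bookkeeping with the state sums of Theorem~\ref{the01} and Proposition~\ref{pro01}.
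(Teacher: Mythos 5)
Your proposal is correct and follows essentially the same route as the paper: reduce each case to boundary-component counts via Theorem~\ref{the01} and Proposition~\ref{pro01}, and match terms along the isomorphism using Lemma~\ref{lem01}. The only difference is that you make explicit (via your structural facts (i)--(ii) and the Key Lemma) the step the paper merely asserts, namely that deletion and partial Petrial transform signed intersection graphs by taking induced subgraphs and flipping signs, so that $SI\bigl(B_1^{\tau(T)}\setminus S^c\bigr)\cong SI\bigl(B_2^{\tau(\phi(T))}\setminus \phi(S)^c\bigr)$.
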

\begin{proof}
For $\bullet=\langle\delta\rangle$, for any subset $A_1$ of edges of $B_1$, we denote its corresponding vertex subset of $SI(B_1)$ by $A_1$. Since $SI(B_1)\cong SI(B_2)$, there is a corresponding subset $A_2$ of vertices of $SI(B_2)$ such that $SI(B_1)\setminus {A_1}^c\cong SI(B_2)\setminus {A_2}^c$. Thus
$SI(B_1\setminus {A_1}^c)\cong SI(B_2\setminus {A_2}^c)$.
It follows that $f(B_1\setminus {A_1}^c)=f(B_2\setminus {A_2}^c)$ by Lemma \ref{lem01}. Hence,
$$P_{\langle\delta\rangle}(B_1, x)=\sum\limits_{A_1\subseteq E(B_1)}x^{f(B_1\setminus {A_1}^c)}=\sum\limits_{A_2\subseteq E(B_2)}x^{f(B_2\setminus {A_2}^c)}=P_{\langle\delta\rangle}(B_2, x).$$

For $\bullet=\langle\tau\delta\tau\rangle$,
since $SI(B_1)\cong SI(B_2)$, we have $SI({B_1}^{\times})\cong SI({B_2}^{\times})$. By Proposition \ref{pro01} (1), $$P_{\langle\tau\delta\tau\rangle}(B_1, x)=P_{\langle\delta\rangle}({B_1}^{\times}, x)=P_{\langle\delta\rangle}({B_2}^{\times}, x)=P_{\langle\tau\delta\tau\rangle}(B_2, x).$$

For $\bullet = \langle\delta\tau\rangle$, consider any edge subsets $A_1, A_2 \subseteq E(B_1)$ with $A_1 \cap A_2 = \emptyset$. Since $SI(B_1) \cong SI(B_2)$, there exist corresponding subsets $A_1', A_2' \subseteq E(B_2)$ such that $$SI({B_1}^{\tau(A_2)}\setminus A_1)\cong SI({B_2}^{\tau({A_2}')}\setminus {A_1}').$$ It follows that $$f({B_1}^{\tau(A_2)}\setminus A_1)=f({B_2}^{\tau({A_2}')}\setminus {A_1}')$$ by Lemma \ref{lem01}. Hence,
\begin{eqnarray*}
P_{\langle\delta\tau\rangle}(B_1, x)&=&\sum\limits_{\biguplus_{i=1}^{3} A_i = E(B_1)}x^{f({B_1}^{\tau(A_1)}\setminus A_2)}\\&=&\sum\limits_{\biguplus_{i=1}^{3} {A_i}' = E(B_2)}x^{f({B_2}^{\tau({A_1}')}\setminus {A_2}')}=P_{\langle\delta\tau\rangle}(B_2, x).
\end{eqnarray*}

For $\bullet=\langle\delta, \tau\rangle$,  by Proposition \ref{pro01} (2),

$$P_{\langle\delta,\tau\rangle}(B_1,x)=2^{e(B_1)}P_{\langle\delta\tau\rangle}(B_1,x)=2^{e(B_2)}P_{\langle\delta\tau\rangle}(B_2,x)=P_{\langle\delta,\tau\rangle}(B_2,x).$$
\end{proof}

\begin{remark}
The converse of Theorem \ref{the04} does not hold: distinct signed intersection graphs may yield identical vertex polynomials.  A signed rotation of a bouquet is defined as a cyclic ordering of half-edges at its single vertex, where orientable loops have both half-edges sharing the same sign $(+$ or $-)$, non-orientable loops have half-edges with opposite signs $(+$ and $-)$, and the $+$ sign is conventionally omitted. Consider these bouquets with distinct signed intersection graphs but identical vertex polynomials:
$B_1 = (1, 2, -1, 2), B_2 = (1, 2, -1, -2), B_3 = (1, 2, 1, 2), B_4 = (1, 2, 3, 1, 2, 3) , B_5 = (1, 2, 3, 1, 2, -3).$
Their vertex polynomials coincide as follows:
\begin{align*}
&P_{\langle\delta\rangle}(B_1, x) = P_{\langle\delta\rangle}(B_2, x) = 3x + x^2, \\
&P_{\langle\tau\delta\tau\rangle}(B_1, x) = P_{\langle\tau\delta\tau\rangle}(B_3, x) = 3x + x^2, \\
&P_{\langle\delta\tau\rangle}(B_4, x) = P_{\langle\delta\tau\rangle}(B_5, x) = x^3 + 10x^2 + 16x, \\
&P_{\langle\delta, \tau\rangle}(B_4, x) = P_{\langle\delta,\tau\rangle}(B_5, x) = 8x^3 + 80x^2 + 128x,
\end{align*}
where these equalities follow from Theorem \ref{the03}.
\end{remark}

\section{Topological interpretations via interlace and transition polynomial}

A set system $D=(E,\mathcal{F})$ is a finite set $E$ together with a subset $\mathcal{F}$ of the set $2^{E}$ of all subsets in $E$. The set $E$ is called the \emph{ground set} of the set system, and elements of $\mathcal{F}$ are its \emph{feasible sets}. $D$ is \emph{proper} if $\mathcal{F}\neq \emptyset$. Below, we consider only proper set systems without explicitly indicating this. As introduced by Bouchet in \cite{AB1}, a \emph{delta-matroid} is a set system $D=(E, \mathcal{F})$ such that if $X, Y \in \mathcal{F}$ and $u\in X\Delta Y$, then there is $v\in X\Delta Y$ (possibly  $v=u$ ) such that $X\Delta \{u, v\}\in \mathcal{F}$.  Here $X\Delta Y:=(X\cup Y)\setminus (X\cap Y)$  is the usual symmetric difference of sets. The \emph{distance}  $d_D(X)$ of a subset $X\subseteq E$ to $D$ is the minimal number of elements in the symmetric difference of $X$ with elements of $\mathcal{F}$,
\[d_D(X)=\min_{ F\in \mathcal{F}} |X \mathrm{\Delta}  F |.\]

Let $G$ be a ribbon graph and let $$\mathcal{F}(G):=\{F\subseteq E(G)~|~\text{$G\setminus F^c$ has a single face}\}.$$ We call $D(G)=:(E(G), \mathcal{F}(G))$ the \emph{delta-matroid} \cite{CMNR} of $G$.

\begin{definition}[\cite{Brijder}]
Let $D$ be a set system over $E$. The interlace polynomial for $D$ is defined as
$$L(D,x)=\sum_{X\subseteq E}x^{d_D(X)}.$$
\end{definition}

The interlace polynomial of a ribbon graph $G$ is the interlace polynomial of its delta-matroids, i.e., $L(G, x)\coloneqq L(D(G), x)$. The following assertion, which relates the distance from a subset of edges of a ribbon graph to its delta-matroid with the number of faces of the ribbon graph induced by this subset.

\begin{lemma}[\cite{NKL}]\label{lem04}
Let $G$  be a ribbon graph. Then, for a subset $A\subseteq E(G)$, we have $$d_{D(G)}(A)=f(G\setminus A^c)-1.$$
\end{lemma}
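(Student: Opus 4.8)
The plan is to transfer the statement to a question about partial duals of $G' \coloneqq G^{\delta(A)}$, using Lemma \ref{lem02} at both ends. We may assume $G$ is connected, so that $\mathcal{F}(G)\neq\emptyset$ and $D(G)$ is proper (when $G$ is disconnected $\mathcal{F}(G)=\emptyset$, and one instead works componentwise). By Lemma \ref{lem02}, $v(G')=f(G\setminus A^c)$; set $k\coloneqq v(G')$. Since partial duality leaves the edge set unchanged and acts disjointly on components, $G'$ is a connected ribbon graph with $E(G')=E(G)$ and exactly $k$ vertices.

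The first step is to rewrite the feasible sets. Applying Lemma \ref{lem02} to a subset $F\subseteq E(G)$ shows that $F$ is feasible if and only if $f(G\setminus F^c)=v(G^{\delta(F)})=1$, i.e. if and only if $G^{\delta(F)}$ is a bouquet. Using the composition law $G^{\delta(A)\delta(C)}=G^{\delta(A\Delta C)}$ for partial duals (immediate from $\delta(e)^2=\mathrm{id}$ and the commutativity of $\delta$ on disjoint edge sets) and setting $C\coloneqq A\Delta F$, we obtain $(G')^{\delta(C)}=G^{\delta(F)}$ and $|A\Delta F|=|C|$. Since $F\mapsto A\Delta F$ is a bijection of $2^{E(G)}$, this gives
\[
d_{D(G)}(A)=\min_{F\in\mathcal{F}(G)}|A\Delta F|=\min\bigl\{\,|C| : C\subseteq E(G'),\ (G')^{\delta(C)}\ \text{is a bouquet}\,\bigr\},
\]
so it suffices to show that the least number of edges whose dualization turns the connected ribbon graph $G'$ into a bouquet equals $k-1$.

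For the lower bound: if $(G')^{\delta(C)}$ is a bouquet then, by Lemma \ref{lem02}, $1=v\bigl((G')^{\delta(C)}\bigr)=f(G'\setminus C^c)$; but $G'\setminus C^c$ is a spanning ribbon subgraph of $G'$ on $k$ vertices with only $|C|$ edges, hence it has at least $k-|C|$ connected components, each carrying at least one boundary component, so $1\geq k-|C|$ and $|C|\geq k-1$. For the matching upper bound, let $T$ be the edge set of a spanning tree of $G'$, so $|T|=k-1$; then $G'\setminus T^c$ is a connected ribbon graph whose underlying graph is a tree, hence it is contractible, and Euler's formula forces exactly one boundary component (equivalently, build it up one ribbon at a time, each attachment keeping it a topological disc). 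Thus $v\bigl((G')^{\delta(T)}\bigr)=f(G'\setminus T^c)=1$, i.e. $(G')^{\delta(T)}$ is a bouquet, and the minimum above is at most $k-1$. Combining the two bounds yields $d_{D(G)}(A)=k-1=f(G\setminus A^c)-1$.

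I expect the only genuinely delicate point to be the upper bound — producing a set of exactly $f(G\setminus A^c)-1$ edges whose dualization collapses $G'$ to a single vertex. Passing to $G'$ is what makes this transparent, since there it becomes the existence of a spanning tree; a direct argument on $G$ would instead have to show that whenever a spanning ribbon subgraph has at least two boundary components the connectedness of $G$ supplies an absent edge joining two distinct boundary components (whose addition merges them), and that bookkeeping is exactly what the spanning-tree reformulation removes. The remaining ingredients — the single-edge estimates, the standard partial-dual identities, and the counting of components — are routine.
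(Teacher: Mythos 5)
Your proof is correct, and it is worth noting that the paper itself gives no argument for this statement at all: Lemma \ref{lem04} is quoted from the reference [NKL], so there is no internal proof to compare against. Your route is a clean, self-contained substitute. The key moves all check out: translating feasibility of $F$ into ``$G^{\delta(F)}$ is a bouquet'' via Lemma \ref{lem02}; the composition rule $G^{\delta(A)\delta(C)}=G^{\delta(A\Delta C)}$ (standard for partial duality, and correctly justified from $\delta(e)^2=\mathrm{id}$ plus commutativity on distinct edges), which turns $d_{D(G)}(A)$ into the minimum number of edges whose dualization collapses $G'=G^{\delta(A)}$ to one vertex; the lower bound by counting components of the spanning ribbon subgraph $(V(G'),C)$, each of which carries at least one boundary component; and the upper bound via a spanning tree $T$, where the tree ribbon subgraph is a disc (Euler characteristic $v-e=1$ together with $\chi=2-2g-b$, resp.\ $2-g-b$, forces $b=1$ regardless of half-twists), so $(G')^{\delta(T)}$ is a bouquet. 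One small caveat: the identity as stated only makes sense for connected $G$, since otherwise $\mathcal{F}(G)=\emptyset$ under the paper's definition and $d_{D(G)}$ is undefined; your parenthetical ``work componentwise'' does not literally repair the displayed formula in that case, but this is a defect of the statement (implicitly assumed connected in [NKL] and in this paper's use of it), not of your argument.
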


By Theorem \ref{the01}, Proposition \ref{pro01} and Lemma \ref{lem04}, we can obtain the relationship between vertex polynoimals and interlaced polynomials.

\begin{theorem}
Let $G$ be a ribbon graph. Then
\begin{align*}
&P_{\langle\delta\rangle}(G,x)=xL(G,x).\\
&P_{\langle\tau\delta\tau \rangle}(G, x)=xL(G^{\times},x).
\end{align*}
\end{theorem}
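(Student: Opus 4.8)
The plan is to combine the state-sum description of the vertex polynomials from Theorem \ref{the01} with the distance-versus-face identity of Lemma \ref{lem04}. For the first equation, Theorem \ref{the01}(1) expresses
\[
P_{\langle\delta\rangle}(G,x)=\sum_{A\subseteq E(G)}x^{f(G\setminus A^c)},
\]
while Lemma \ref{lem04} gives $f(G\setminus A^c)=d_{D(G)}(A)+1$ for every $A\subseteq E(G)$. Substituting this into the exponent and pulling out the common factor $x$ yields
\[
P_{\langle\delta\rangle}(G,x)=\sum_{A\subseteq E(G)}x^{d_{D(G)}(A)+1}=x\sum_{A\subseteq E(G)}x^{d_{D(G)}(A)}=xL(D(G),x)=xL(G,x),
\]
using the definition of the interlace polynomial $L(D,x)=\sum_{X\subseteq E}x^{d_D(X)}$ and the convention $L(G,x)\coloneqq L(D(G),x)$.

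For the second equation, the cleanest route is to first reduce to the previous case via Proposition \ref{pro01}(1), which states $P_{\langle\tau\delta\tau\rangle}(G,x)=P_{\langle\delta\rangle}(G^{\times},x)$. Applying the already-established first identity to the ribbon graph $G^{\times}$ in place of $G$ gives $P_{\langle\delta\rangle}(G^{\times},x)=xL(G^{\times},x)$, and combining the two equalities produces $P_{\langle\tau\delta\tau\rangle}(G,x)=xL(G^{\times},x)$ as claimed. (Alternatively one could start from Theorem \ref{the01}(2), $P_{\langle\tau\delta\tau\rangle}(G,x)=\sum_{A}x^{f(G^{\times}\setminus A^c)}$, apply Lemma \ref{lem04} to the ribbon graph $G^{\times}$ to rewrite $f(G^{\times}\setminus A^c)=d_{D(G^{\times})}(A)+1$, and recognize the resulting sum as $xL(G^{\times},x)$; the two arguments are equivalent.)

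There is essentially no hard part here: the statement is a direct bookkeeping consequence of the machinery already assembled. The only point requiring a moment's care is making sure the index set of the summation is handled correctly when passing to $G^{\times}$ — namely that $E(G^{\times})=E(G)$ so that summing over $A\subseteq E(G)$ is the same as summing over $A\subseteq E(G^{\times})$, exactly as was used in the proof of Proposition \ref{pro01}(1). Beyond that, one only needs to verify that Lemma \ref{lem04} applies verbatim to an arbitrary ribbon graph (including $G^{\times}$), which it does since it is stated for all ribbon graphs, and that the shift by $+1$ in the exponent factors cleanly out of the sum.
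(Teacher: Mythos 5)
Your proposal is correct and follows exactly the route the paper itself indicates: the paper states this theorem without a written-out proof, citing precisely Theorem \ref{the01}, Proposition \ref{pro01}, and Lemma \ref{lem04}, which is the combination you use. Your substitution of $f(G\setminus A^c)=d_{D(G)}(A)+1$ into the state sum and the reduction of the $\langle\tau\delta\tau\rangle$ case to $G^{\times}$ via Proposition \ref{pro01}(1) supply the intended argument with no gaps.
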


Let $G$ be a ribbon graph, $\bm{\alpha}=\{\alpha_e\}_{e\in E(G)}, \bm{\beta}=\{\beta_e\}_{e\in E(G)}, \bm{\gamma}=\{\gamma_e\}_{e\in E(G)}, \bm{1}=\{1\}_{e\in E(G)}, \bm{0}=\{0\}_{e\in E(G)}$ be indexed families of indeterminates, and $x$ be another indeterminate. Then the \emph{topological transition polynomial} \cite{EM1}, $Q(G; (\bm{\alpha}, \bm{\beta}, \bm{\gamma}), x)$, can be defined by the recursion relation (which holds for every edge $e$)
\begin{align*}
Q(G; (\bm{\alpha}, \bm{\beta}, \bm{\gamma}), x)&=\alpha_eQ(G/e; (\bm{\alpha}, \bm{\beta}, \bm{\gamma}), x)\\&~~~~+\beta_eQ(G\setminus e; (\bm{\alpha}, \bm{\beta}, \bm{\gamma}), x)+\gamma_eQ(G^{\tau(e)}/e; (\bm{\alpha}, \bm{\beta}, \bm{\gamma}), x),
\end{align*}
together with the boundary condition $Q(G; (\bm{\alpha}, \bm{\beta}, \bm{\gamma}), x)=x^{v(G)}$ when $G$ is an edgeless ribbon graph. Here, on the right-hand side, $(\bm{\alpha}, \bm{\beta}, \bm{\gamma})$ denotes the weight system for $G$ restricted to $G/e, G\setminus e$, or $G^{\tau(e)}/e$ which is obtained by eliminating the weights for $e$. By Theorem~\ref{the03}, the vertex polynomial coincides with the topological transition polynomial at the following specializations:

\begin{theorem}
Let $G$ be a ribbon graph. Then
\begin{description}
    \item[(1)]  $P_{\langle\delta\rangle}(G, x)=Q(G; (\bm{1}, \bm{1}, \bm{0}), x)$.
    \item[(2)] $P_{\langle\tau\delta\tau\rangle}(G, x)=Q(G; (\bm{1}, \bm{0}, \bm{1}), x)$.
    \item[(3)] $P_{\langle\delta\tau\rangle}(G, x)=Q(G; (\bm{1}, \bm{1}, \bm{1}), x)$.
    \item[(4)] $P_{\langle\delta, \tau\rangle}(G, x)=2^{e(G)}Q(G; (\bm{1}, \bm{1}, \bm{1}), x)$.
\end{description}
\end{theorem}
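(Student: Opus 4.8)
The plan is to prove parts (1)--(3) simultaneously by induction on the number of edges $e(G)$, using that $P_\bullet$ and the indicated specialization of $Q$ satisfy the same deletion--contraction--twisted-contraction recursion with the same boundary value, and then to obtain part (4) from part (3) via Proposition~\ref{pro01}(2).

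For the base case, if $G$ is edgeless then, for every $\bullet$, the orbit $\mathrm{Orb}_\bullet(G)$ is the singleton $\{G\}$, since there is no edge on which $\delta$ or $\tau$ can act; hence $P_\bullet(G,x)=x^{v(G)}$. On the other hand, the boundary condition defining the transition polynomial gives $Q(G;(\bm\alpha,\bm\beta,\bm\gamma),x)=x^{v(G)}$ for every weight system, and $2^{e(G)}=1$. So all four identities hold when $G$ has no edges.

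For the inductive step, fix $e\in E(G)$. For part (1), Theorem~\ref{the03}(1) gives $P_{\langle\delta\rangle}(G,x)=P_{\langle\delta\rangle}(G\setminus e,x)+P_{\langle\delta\rangle}(G/e,x)$; since $G\setminus e$ and $G/e$ have fewer edges, the induction hypothesis rewrites the right-hand side as $Q(G\setminus e;(\bm 1,\bm 1,\bm 0),x)+Q(G/e;(\bm 1,\bm 1,\bm 0),x)$, which is exactly $Q(G;(\bm 1,\bm 1,\bm 0),x)$ by the recursion applied at $e$ with $\alpha_e=\beta_e=1$ and $\gamma_e=0$. Parts (2) and (3) follow the same pattern: from Theorem~\ref{the03}(2), respectively~(3), one reads off the coefficients attached to the reduced graphs $G\setminus e$, $G/e$, and $G^{\tau(e)}/e$, identifies these with the weights that the transition-polynomial recursion assigns to deletion, contraction, and twisted contraction respectively, applies the induction hypothesis to each reduced graph occurring, and then recollects the terms via the recursion to recover $Q(G;\cdot,x)$ at the matching specialization. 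Finally, for part (4), Proposition~\ref{pro01}(2) gives $P_{\langle\delta,\tau\rangle}(G,x)=2^{e(G)}P_{\langle\delta\tau\rangle}(G,x)$, and substituting part (3) yields $P_{\langle\delta,\tau\rangle}(G,x)=2^{e(G)}Q(G;(\bm 1,\bm 1,\bm 1),x)$.

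The one point that needs care is the bookkeeping in parts (2) and (3): one must keep straight that in the recursion $\bm\alpha$ is the coefficient of the contraction $G/e$, $\bm\beta$ that of the deletion $G\setminus e$, and $\bm\gamma$ that of the twisted contraction $G^{\tau(e)}/e$, so that the asymmetry between deletion and contraction in Theorem~\ref{the03}(2) is transcribed into the correct slots of the weight triple. No consistency issue arises, since $Q$ is given as satisfying the displayed recursion for \emph{every} edge, so we may use whichever $e$ was removed; and the induction terminates because each of deletion, contraction, and twisted contraction strictly decreases the number of edges.
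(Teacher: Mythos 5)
Your overall strategy --- induction on $e(G)$, using Theorem~\ref{the03} as the recursion satisfied by $P_{\bullet}$, matching it term by term against the defining recursion of $Q$, checking the edgeless base case $x^{v(G)}$ on both sides, and deducing (4) from (3) via Proposition~\ref{pro01}(2) --- is exactly the argument the paper intends; the paper itself offers nothing beyond the remark that the theorem follows from Theorem~\ref{the03}. Parts (1), (3) and (4) go through precisely as you describe.

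Your handling of part (2), however, hides a genuine problem. Under the convention you explicitly adopt (and which agrees with the paper's displayed recursion) --- $\bm{\alpha}$ weighting $G/e$, $\bm{\beta}$ weighting $G\setminus e$, $\bm{\gamma}$ weighting $G^{\tau(e)}/e$ --- the recursion of Theorem~\ref{the03}(2), namely $P_{\langle\tau\delta\tau\rangle}(G,x)=P_{\langle\tau\delta\tau\rangle}(G\setminus e,x)+P_{\langle\tau\delta\tau\rangle}\left(G^{\tau(e)}/e,x\right)$, matches the specialization $(\bm{0},\bm{1},\bm{1})$, not the $(\bm{1},\bm{0},\bm{1})$ asserted in the statement. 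Your phrase ``at the matching specialization'' therefore silently proves a different identity from the one you set out to prove. Concretely, if $G$ is a bouquet with a single trivial orientable loop $e$, then $P_{\langle\tau\delta\tau\rangle}(G,x)=2x$, while $Q(G;(\bm{1},\bm{0},\bm{1}),x)=Q(G/e;\cdot,x)+Q(G^{\tau(e)}/e;\cdot,x)=x^{2}+x$, so part (2) as stated is incompatible with the displayed recursion: either the theorem statement or the recursion has the roles of $\bm{\alpha}$ and $\bm{\beta}$ interchanged (this does not affect (1), (3), (4), whose weight triples are symmetric in those two slots). To repair your write-up you must either prove $P_{\langle\tau\delta\tau\rangle}(G,x)=Q(G;(\bm{0},\bm{1},\bm{1}),x)$ and flag the discrepancy with the stated theorem, or adopt the opposite weight convention throughout --- in which case the bookkeeping claim in your final paragraph is the step that is wrong as written.
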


\begin{remark}
Since the topological transition polynomial relates to both the Tutte polynomial and the Bollobás-Riordan polynomial, the vertex polynomial consequently connects to these classical invariants.
\end{remark}

\section{Acknowledgements}
This work is supported by NSFC (Nos. 12471326, 12101600), and partially supported by the  the 111 Project (No. D23017), the Excellent Youth Project of Hunan Provincial Department of Education, P. R. China (No. 23B0117).

\end{document}